\numberwithin{equation}{section}
\newtheoremstyle{personal}%
{12pt}
{12pt}
{\slshape}
{}
{\bfseries}
{.}
{.5em}
{}
\theoremstyle{personal}%
\newtheorem{thm}{Theorem}[section]
\newtheorem{cor}[thm]{Corollary}
\newtheorem{lem}[thm]{Lemma}
\newtheorem{prop}[thm]{Proposition}
\newtheorem{quest}[thm]{Question}
\theoremstyle{definition}
\newtheorem{rem}[thm]{Remark}
\newcommand{\fix}{\mathrm{fix}}
\newcommand{\A}{\mathcal{A}}
\newcommand{\M}{\mathcal{M}}
\newcommand{\B}{\mathcal{B}}
\newcommand{\K}{\mathds{K}}
\newcommand{\N}{\mathds{N}}
\newcommand{\Z}{\mathds{Z}}
\newcommand{\R}{\mathds{R}}
\newcommand{\F}{\mathds{F}}
\newcommand{\RP}{\mathds{R}\PP}
\newcommand{\PP}{\mathds{P}}
\newcommand{\C}{\mathds{C}}
\newcommand{\UU}{\mathcal{U}}
\newcommand{\WW}{\mathcal{W}}
\newcommand{\ECC}{\mathrm{ECC}}
\newcommand{\ECH}{\mathrm{ECH}}
\newcommand{\Hom}{\mathrm{H}}
\newcommand{\diff}{\mathrm{d}}
\newcommand{\id}{\mathrm{id}}
\newcommand{\sigmap}{\sigma_{\mathrm{p}}}
\newcommand{\Tan}{\mathrm{T}}
\begin{document}

\title[Contact 3-manifolds all of whose Reeb orbits are closed]{The action spectrum characterizes closed contact 3-manifolds all of whose Reeb orbits are closed}

\author{Daniel Cristofaro-Gardiner}
\address{Daniel Cristofaro-Gardiner\newline\indent Department of Mathematics, University of California, Santa Cruz\newline\indent
1156 High Street, Santa Cruz, CA 95064, USA}
\email{dcristof@ucsc.edu}

\author{Marco Mazzucchelli}
\address{Marco Mazzucchelli\newline\indent CNRS, \'Ecole Normale Sup\'erieure de Lyon, UMPA\newline\indent  46 all\'ee d'Italie, 69364 Lyon Cedex 07, France\newline\indent and\newline\indent Mathematical Sciences Research Institute\newline\indent 17 Gauss Way, Berkeley, CA 94720, USA}
\email{marco.mazzucchelli@ens-lyon.fr}

\date{January 30, 2019. \emph{Revised}: May 19, 2019.}
\subjclass[2010]{53D10, 53D42}
\keywords{Reeb flow, Besse contact form, Zoll contact form, embedded contact homology, Seifert fibration}

\begin{abstract}
A classical theorem due to Wadsley implies that, on a connected contact manifold all of whose Reeb orbits are closed, there is a common period for the Reeb orbits. In this paper we show that, for any Reeb flow on a closed connected 3-manifold, the following conditions are actually equivalent: (1)~every Reeb orbit is closed; (2)~all closed Reeb orbits have a common period; (3)~the action spectrum has rank 1. We also show that, on a fixed closed connected 3-manifold, a contact form with an action spectrum of rank 1 is determined (up to pull-back by diffeomorphisms) by the set of minimal periods of its closed Reeb orbits.
\end{abstract}

\maketitle

\section{Introduction}
\label{s:intro}

A much studied problem in Riemannian geometry asks to what degree a Riemannian manifold is determined by its length spectrum, that is, the set of lengths of its closed geodesics.   It is known that the length spectrum does not in general recover the metric, but more refined conjectures and results exist, see for example \cite{Croke:1990tk, Otal:1990yv, Guillarmou:2018qo} and references therein.

In contact geometry, an analogous question exists, but little is known.  Recall that a contact form on a closed $(2n+1)$-manifold $Y$ is a 1-form $\lambda$ such that $\lambda\wedge(\diff\lambda)^n$ is a volume form on $Y$.  The kernel of $\diff\lambda$ is then generated by a unique vector field $R_\lambda$ such that $\lambda(R_\lambda)\equiv1$, called the Reeb vector field, which defines a Reeb flow $\phi_\lambda^t:Y\to Y$.  A Reeb orbit $\gamma:\R\to Y$, $\gamma(t)=\phi_\lambda^t(z)$ is said to be closed if it is $\tau$-periodic for some $\tau>0$, i.e.\ $\gamma(t)=\gamma(t+\tau)$ for all $t\in\R$. As usual, the minimal period of a closed Reeb orbit $\gamma$ is the minimal $\tau>0$ such that $\gamma$ is $\tau$-periodic; the multiples of such $\tau$ will be simply called periods of $\gamma$. The subset $\sigma(Y,\lambda)\subset(0,\infty)$ consisting of the (not necessarily minimal) periods of the closed Reeb orbits of $\phi_\lambda^t$ is the \textbf{action spectrum} of the contact manifold, whereas its subset $\sigmap(Y,\lambda)\subsetneq\sigma(Y,\lambda)$ consisting of the minimal periods of the closed Reeb orbits of $\phi_\lambda^t$ is the \textbf{prime action spectrum}.  One can now ask to what degree we can characterize $\lambda$ from its action and prime action spectra. In the present note we establish some positive results in dimension~3.

\subsection{Setup and main results}
A contact form $\lambda$ is called \textbf{Besse} when every orbit of its Reeb flow is closed. Our first result states that one can recognize whether a contact form on a closed connected $3$-manifold is Besse from its action spectrum.  We define the \textbf{rank} of the action spectrum $\sigma(Y,\lambda)$ to be the rank of the $\mathbb{Z}$-submodule of $\mathbb{R}$ that it generates (this is the same as the rank of the submodule generated by the prime action spectrum $\sigmap(Y,\lambda)$). In particular, $\sigma(Y,\lambda)$ has rank 1 if and only if it is contained in a subset of the form $\{nT\ |\ n\in\N\}$ for some $T>0$.

\begin{thm}
\label{t:Besse}
Let $(Y,\lambda)$ be a closed connected 3-manifold equipped with a contact form. The following conditions are equivalent:
\begin{itemize}[topsep=3pt]
\item[$(\mathrm{i})$] The contact manifold $(Y,\lambda)$ is Besse.
\item[$(\mathrm{ii})$] The closed orbits of the Reeb flow $\phi_\lambda^t$ have a common period, i.e.\ there is $\tau>0$ such that $\tau/\tau'$ is an integer for all $\tau'\in \sigmap(Y,\lambda)$.
\item[$(\mathrm{iii})$] The action spectrum $\sigma(Y,\lambda)$ has rank 1.
\end{itemize}
\end{thm}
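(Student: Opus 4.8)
The plan is to establish the cycle of implications $(\mathrm{i})\Rightarrow(\mathrm{ii})\Rightarrow(\mathrm{iii})\Rightarrow(\mathrm{i})$. The first implication is Wadsley's theorem: if every Reeb orbit is closed, then $\phi_\lambda^t$ admits a common period $\tau$ for all its orbits, which is in particular a common period for the closed ones. For $(\mathrm{ii})\Rightarrow(\mathrm{iii})$: if $\tau$ is a common period then $\tau/\tau'\in\N$ for all $\tau'\in\sigmap(Y,\lambda)$, so $\sigmap(Y,\lambda)\subset\Q\tau$; hence the $\Z$-submodule of $\R$ generated by $\sigmap(Y,\lambda)$ (which has the same rank as the one generated by $\sigma(Y,\lambda)$) lies in $\Q\tau$, and it is nonzero because there is at least one closed Reeb orbit; therefore its rank is $1$.

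The substantive implication is $(\mathrm{iii})\Rightarrow(\mathrm{i})$, and here I would use embedded contact homology. The key elementary observation is that each ECH spectral invariant $c_k(Y,\lambda)$ is the symplectic action of an ECH generator, i.e.\ of a finite collection of closed Reeb orbits counted with multiplicity, so $c_k(Y,\lambda)$ always lies in the $\Z_{\geq 0}$-span of $\sigma(Y,\lambda)$, which under $(\mathrm{iii})$ is contained in a rank-$1$ subgroup $G\subset\R$. I would first dispose of the case in which $(Y,\lambda)$ has exactly two simple closed Reeb orbits $\gamma_1,\gamma_2$: using a (possibly rational) global surface of section bounded by $\gamma_1$ together with the Poincar\'e--Birkhoff/Franks theory of its area-preserving first-return map, one sees that the two minimal periods $T_1,T_2$ must be irrationally related, so $(\mathrm{iii})$ already fails; consequently, under $(\mathrm{iii})$ there are infinitely many simple closed Reeb orbits.

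It then remains to show that a contact form with infinitely many simple closed Reeb orbits and $\sigma(Y,\lambda)\subset G$ of rank $1$ is Besse. For this I would combine the ECH volume identity $\lim_{k\to\infty}c_k(Y,\lambda)^2/k=2\,\mathrm{vol}(Y,\lambda)$ with the $U$-map structure of ECH. The volume law forces the spectral gaps $c_{k+1}(Y,\lambda)-c_k(Y,\lambda)\in G$ to tend to $0$; tracking which orbit sets realize consecutive $c_k$'s — using that $U$ lowers the ECH index by exactly $2$, so these generators differ in a controlled way — should let one extract, for all large $k$, a single simple Reeb orbit whose period is a common period of all but finitely many simple Reeb orbits, and then upgrade this to a common period for the entire action spectrum; one finally reads off from the resulting Seifert-type fibered structure on $Y$ that $\phi_\lambda^t$ is periodic, hence Besse.

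The main obstacle is precisely this last extraction. Rank $1$ only asserts that all periods are \emph{commensurable}; one must strengthen this, using the rigid asymptotics that ECH provides but that commensurability alone does not, both to a single common multiple of all minimal periods and to the nonexistence of even one non-closed Reeb orbit — the two features separating a periodic flow from a flow whose periods merely span a rank-$1$ group. ECH is the natural tool here because it detects enough closed Reeb orbits, and enough of their actions, to reconstruct the fibered structure underlying the Besse condition.
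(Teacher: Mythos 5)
Your cycle of implications matches the paper's, and (i)$\Rightarrow$(ii) via Wadsley is exactly the paper's step. However, already in (ii)$\Rightarrow$(iii) you prove less than what the hard direction needs: commensurability, i.e.\ $\sigmap(Y,\lambda)\subset\Q\tau$, only shows the generated $\Z$-module has rank $1$ in the weak sense, and such a group can be dense in $\R$. The paper instead uses compactness of $Y$ (uniform positive lower bound on minimal periods, hence finitely many possible ratios $\tau/\tau'$) to produce a single integer $k$ with $\sigma(Y,\lambda)\subset\{n\tau/k\ |\ n\in\N\}$; this \emph{discreteness} is precisely what is exploited later, since in the proof of (iii)$\Rightarrow$(i) one needs consecutive ECH spectral invariants to be separated by a fixed $T>0$ unless they coincide. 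Your own sketch runs into exactly this: you observe that the Weyl-law asymptotics force the spectral gaps in your rank-one group $G$ to tend to $0$, but if $G$ may be dense no conclusion follows, whereas with $\sigma(Y,\lambda)\subset T\N$ one immediately gets $c_{\sigma_{k+1}}(Y,\lambda)=c_{\sigma_k}(Y,\lambda)$ for some $k$ from the sequence of Lemma~\ref{l:CGH}.

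The genuine gap is in (iii)$\Rightarrow$(i). First, your preliminary reduction (``exactly two simple orbits implies irrationally related periods'' via a global surface of section and Franks' theorem) is itself an unproved and substantially harder statement than anything you may quote, and it is not needed. Second, and more fundamentally, your plan never rules out the existence of a \emph{non-closed} Reeb orbit: at best it aims to extract a common period for the closed orbits, i.e.\ condition (ii), and then ``reads off'' a Seifert-type fibration — but such a fibered structure exists only once the flow is known to be periodic, so this step presupposes the Besse property you are trying to establish. The paper's essential new ingredient, absent from your proposal, is Lemma~\ref{l:Besse_spectral}: if $c_\sigma(Y,\lambda)=c_{U\sigma}(Y,\lambda)$ for some $\sigma\in\ECH(Y)$ with $U\sigma\neq0$, then $\lambda$ is Besse. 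Its proof assumes a non-closed orbit exists, builds a flow box $K$ around a point $z$ on it, perturbs to bumpy forms equal to $\lambda$ on $K$ (Proposition~\ref{p:bumpy}), and applies Taubes-type compactness to the ECH-index-$2$ holomorphic curves through $(0,z)$ that define $U_z$: the vanishing of their $\diff\lambda$-energy forces a limit containing a trivial cylinder over a long orbit segment, whose slice action exceeds the action bound $\A_{\lambda_n}(\bm\gamma_n)<\tau'$, a contradiction. Your concluding paragraph acknowledges this ``extraction'' is the obstacle; it is exactly the content you would need to supply, and nothing in the proposal substitutes for it.
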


The fact that the closed Reeb orbits of a Besse contact manifold admit a common period, and thus that the action spectrum has rank 1, is a consequence of a classical theorem due to  Wadsley \cite{Wadsley:1975sp}, together with Sullivan's remark \cite{Sullivan:1978bl} that Reeb flows are geodesible. The novelty, here, is the reverse implication, namely that the fact that the action spectrum has rank 1 forces a contact form to be Besse.

A contact form $\lambda$ is called {\bf Zoll} when it is Besse and its closed Reeb orbits have the same minimal period. Namely, when there exists $\tau>0$ such that $\phi_\lambda^\tau=\id$, and for all $t\in(0,\tau)$ the map $\phi_\lambda^t$ has no fixed points. Theorem~\ref{t:Besse} has the following immediate corollary.   

\begin{cor}
\label{c:Zoll}
A closed contact 3-manifold is Zoll if and only if its closed Reeb orbits have the same minimal period.
\hfill\qed
\end{cor}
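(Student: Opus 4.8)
The plan is to read off both implications directly from Theorem~\ref{t:Besse}. One direction is tautological: if $\lambda$ is Zoll, then by definition it is Besse and there is a $\tau>0$ which is the minimal period of every closed Reeb orbit, so in particular all closed Reeb orbits have the same minimal period. Hence the only thing to prove is the converse.

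Suppose then that all closed Reeb orbits of $\phi_\lambda^t$ have the same minimal period $T>0$. First I would invoke Taubes' proof of the three-dimensional Weinstein conjecture to guarantee that $\phi_\lambda^t$ admits at least one closed orbit; therefore $\sigmap(Y,\lambda)=\{T\}$, and consequently $\sigma(Y,\lambda)$, being the set of integer multiples of elements of $\sigmap(Y,\lambda)$, is contained in $\{nT\ |\ n\in\N\}$ and thus has rank~$1$. Condition~$(\mathrm{iii})$ of Theorem~\ref{t:Besse} is therefore satisfied, and the theorem yields that $(Y,\lambda)$ is Besse, i.e.\ every Reeb orbit is closed. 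Combining this with the hypothesis, every Reeb orbit has minimal period exactly $T$: indeed $\phi_\lambda^T=\id$, and if some $\phi_\lambda^t$ with $t\in(0,T)$ had a fixed point $z$, the orbit through $z$ would be a closed Reeb orbit whose minimal period divides $t$, hence is strictly smaller than $T$, contradicting the hypothesis. Thus $\phi_\lambda^t$ is fixed-point free for all $t\in(0,T)$, which is precisely the definition of $(Y,\lambda)$ being Zoll.

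No real obstacle remains once Theorem~\ref{t:Besse} is available. The only point deserving a word of care is the appeal to Taubes' theorem: it is needed to exclude the vacuous situation in which there are no closed Reeb orbits at all, where the hypothesis ``all closed Reeb orbits have the same minimal period'' holds trivially while a Zoll contact form manifestly has closed Reeb orbits — so the statement would be false without this existence input. Everything else is bookkeeping with the definitions of $\sigma(Y,\lambda)$, $\sigmap(Y,\lambda)$, Besse, and Zoll.
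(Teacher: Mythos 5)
Your proposal is correct and follows the paper's (implicit) argument: the hypothesis puts you in the situation of Theorem~\ref{t:Besse} (via condition (ii) or (iii)), which gives Besse, and Besse together with equal minimal periods is precisely the definition of Zoll. The appeal to Taubes' theorem is a legitimate safeguard against the vacuous case but is not strictly needed: you could instead invoke condition (ii), which your hypothesis gives directly, and note that the existence of a closed Reeb orbit is already built into the statement and ECH-based proof of Theorem~\ref{t:Besse}.
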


\begin{rem}
\label{r:answer}
In \cite[Question 1.2]{Mazzucchelli:2018ek}, the second author and Suhr asked whether a reversible contact form on the unit cotangent bundle of any surface must be Zoll if all its closed Reeb orbits have the same minimal period.  (The motivation for this comes from the connection between the contact geometry of the unit cotangent bundle and Riemannian and Finsler geometry, which we say more about below.)  Corollary~\ref{c:Zoll} answers this in the affirmative, and without the reversibility requirement on the contact form.
\end{rem}

To the best of the authors' knowledge, for general higher dimensional closed contact manifolds it is not known whether the Besse or the Zoll properties can be read off from the action spectra.
\begin{quest}
Let $(Y,\lambda)$ be a closed contact manifold of dimension $n\geq5$. If all its closed Reeb orbits have the same minimal period, is $\lambda$ necessarily Zoll? $Y$ is connected and the action spectrum $\sigma(Y,\lambda)$ has rank 1, is $\lambda$ necessarily Besse?
\end{quest}

By Theorem~\ref{t:Besse}, from the action spectrum one can determine whether or not a contact form on a closed connected 3-manifold is Besse. However, it is not possible to recover the contact form (up to pull-back by diffeomorphisms) from the action spectrum in the Besse case.  For example, the standard 1-form $\lambda_{\mathrm{std}} = \frac{1}{2} \sum_{i=1,2} \big(x_i dy_i - y_i dx_i\big)$ on $\R^4$ restricts as a contact form to the boundary of any symplectic ellipsoid
\[ E(a,b) := \left \lbrace \frac{ \pi |z_1|^2}{a} + \frac{ \pi |z_2|^2}{b} \le 1 \right \rbrace \subset \C^2 = \R^4.\]
Its Reeb flow always has two closed orbits of minimal period $a$ and $b$. 
When $b/a$ is rational, the contact form is Besse and the other closed Reeb orbits have minimal period $\mathrm{lcm}(a, b)$.  Thus, $\partial E(1,1)$ and $\partial E(1,2)$ have the same action spectrum, but their contact forms cannot be diffeomorphic.  We can distinguish these ellipsoids, however, through the prime action spectrum. Indeed, our next theorem states that, in the Besse case, the prime action spectrum  always determines the contact form up to pull-back by diffeomorphisms.

\begin{thm}\label{t:classification}
Let $Y$ be a closed connected 3-manifold, and $\lambda_1,\lambda_2$ two Besse contact forms on $Y$. Then $\sigmap(Y,\lambda_1)=\sigmap(Y,\lambda_2)$ if and only if there exists a diffeomorphism $\psi:Y\to Y$ such that $\psi^*\lambda_2=\lambda_1$.
\end{thm}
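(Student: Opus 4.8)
The plan is as follows. The ``if'' direction is immediate: if $\psi^*\lambda_2=\lambda_1$ then $\psi_*R_{\lambda_1}=R_{\lambda_2}$, so $\psi\circ\phi_{\lambda_1}^t=\phi_{\lambda_2}^t\circ\psi$ for every $t\in\R$, whence $\psi$ carries each closed Reeb orbit of $\lambda_1$ to one of $\lambda_2$ with the same minimal period and $\sigmap(Y,\lambda_1)=\sigmap(Y,\lambda_2)$. The substance of the theorem is the converse, so from now on $S:=\sigmap(Y,\lambda_1)=\sigmap(Y,\lambda_2)$.

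First I would set up the Seifert picture of a Besse contact $3$-manifold. By Wadsley's theorem and Sullivan's observation that Reeb flows are geodesible, each flow $\phi_{\lambda_i}^t$ is periodic; writing $T_i>0$ for its minimal period, the Reeb field $R_{\lambda_i}$ generates an effective circle action $\R/T_i\Z\to\Diff(Y)$, which presents $Y$ as the total space of a Seifert fibration $\pi_i\colon Y\to\mathcal{O}_i$ over a closed oriented $2$-orbifold. Since $\lambda_i(R_{\lambda_i})\equiv1$ and $\iota_{R_{\lambda_i}}\diff\lambda_i\equiv0$, the form $\lambda_i$ is an invariant connection for this orbifold circle bundle, so $\diff\lambda_i$ is basic, $\diff\lambda_i=\pi_i^*\omega_i$, and the contact condition $\lambda_i\wedge\diff\lambda_i\neq0$ forces $\omega_i$ to be an area form on $\mathcal{O}_i$; in particular the rational Euler number $e_i$ of $\pi_i$ is nonzero. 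As the principal orbits are dense with minimal period exactly $T_i$ while every other closed orbit is strictly shorter, $T_i=\max S$; hence $T_1=T_2=:T$, and the multiplicities of the exceptional fibers of $\pi_i$ are precisely the elements of $M:=\{\,T/s : s\in S,\ s<T\,\}$.

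The crucial step---and, I expect, the main obstacle---is to show that the Seifert fibration $\pi_i$ together with the sign of $e_i$ (equivalently: the genus and the cone orders \emph{with multiplicities} of $\mathcal{O}_i$, the Seifert twisting invariants, and $e_i$ itself) is determined by the \emph{fixed} manifold $Y$ and the pair $(T,M)$. I would deduce this from the near-uniqueness of Seifert fibrations of a closed oriented $3$-manifold: among those admitting more than one Seifert fibration, only $S^3$ and the lens spaces can have nonzero Euler number---the others, namely $S^2\times S^1$, $\RP^3\#\RP^3$ and the Euclidean manifolds, all force $e=0$---so for every other $Y$ the fibrations $\pi_1$ and $\pi_2$ are automatically isotopic; and for $S^3$ and lens spaces one checks directly, by a case analysis of Seifert invariants, that among fibrations of prescribed Euler-number sign the multiplicity set $M$ determines the fibration (for $S^3$ it recovers the pair of coprime orders, i.e.\ the relevant ellipsoid $\partial E(a,b)$). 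The delicate point is that $M$ on its own detects neither the number of cone points of a given order nor the twisting invariants: it is the fixity of $Y$, together with the sign of $e$, that makes up the difference. Granting this, a comparison of the (nonzero, sign-constrained) Euler numbers shows that a fiber-preserving diffeomorphism of $Y$ carries $\pi_1$ to $\pi_2$ compatibly with co-orientations---when the two contact orientations of $Y$ disagree one first notes, again from the Euler-number comparison, that $Y$ admits an orientation-reversing diffeomorphism and absorbs it into $\lambda_2$---so that after this normalization $\pi_1=\pi_2=:\pi$ and $e_1=e_2$; a further fiber-preserving isotopy, exploiting that $\rho_1$ and $\rho_2$ have the same orbits, then lets us assume $R_{\lambda_1}=R_{\lambda_2}=:R$.

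Finally I would match the connection forms by a Moser argument on the base. Now $\omega_1,\omega_2$ are positive area forms on the oriented $2$-orbifold $\mathcal{O}:=\mathcal{O}_1=\mathcal{O}_2$ with the same total area ($=T\,|e_1|$), so the orbifold Moser theorem provides an orbifold diffeomorphism isotopic to the identity and pulling $\omega_2$ back to $\omega_1$; lifting it to a circle-equivariant bundle automorphism $\Psi$ of $\pi$ gives $\Psi^*\diff\lambda_2=\diff\lambda_1$ and $\Psi_*R=R$. After replacing $\lambda_2$ by $\Psi^*\lambda_2$, the $1$-form $\lambda_2-\lambda_1$ is closed, annihilates $R$, and is invariant, hence descends to a closed $1$-form $\eta$ on $\mathcal{O}$ with $\lambda_2-\lambda_1=\pi^*\eta$. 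The path $\lambda_s:=\lambda_1+s\,\pi^*\eta$, $s\in[0,1]$, consists of contact forms---indeed $\lambda_s\wedge\diff\lambda_s=\lambda_1\wedge\diff\lambda_1$, since $\eta\wedge\omega_1$, a $3$-form on a surface, vanishes---all with the same Reeb vector field $R$; and if $X$ denotes the $\lambda_1$-horizontal lift of the $\omega_1$-dual of $-\eta$, then $X\in\ker\lambda_s$ and $\mathcal{L}_X\lambda_s=-\tfrac{\diff}{\diff s}\lambda_s$ for all $s$, so the time-$1$ flow $\psi$ of $X$ satisfies $\psi^*\lambda_2=\lambda_1$, as desired.
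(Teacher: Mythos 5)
Your overall architecture matches the paper's (periodic Reeb flow $\Rightarrow$ Seifert fibration over an orientable base, uniqueness of the fibration for generic $Y$ plus a positivity/sign constraint to handle orientations, a separate analysis for lens spaces, then a Moser-type normalization), and your final Moser step is correct — in fact the paper's version is even simpler: once the Reeb fields coincide and the contact orientations agree, the linear interpolation $t\lambda_1+(1-t)\lambda_0$ is already a path of contact forms and the standard Moser trick applies, with no need to first match $\diff\lambda_1$ and $\diff\lambda_2$ via an orbifold Moser theorem on the base. But there is a genuine gap exactly where you say "one checks directly, by a case analysis of Seifert invariants \dots Granting this": the lens-space case is the crux of the theorem and you do not carry it out. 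The difficulty is precisely the one you flag — the prime spectrum only gives the \emph{set} of exceptional multiplicities, not how many exceptional fibers of a given order there are, nor the $\beta$-invariants — and resolving it is nontrivial. In the paper this step rests on Geiges--Lange's classification of Seifert fibrations of lens spaces (at most two singular fibers over $S^2$; for one singular fiber the invariants are $(0;\alpha,p)$ with $\alpha\equiv q$ or $\alpha q\equiv 1 \bmod p$; for two singular fibers the $\beta_j$ are functions of $(p,q,\alpha_1,\alpha_2)$ and $\gcd(\alpha_1,\alpha_2)$ divides $p$), plus an arithmetic argument ruling out the scenario where one fibration has one singular fiber of order $\alpha$ and the other has two, both of order $\alpha$ — a scenario your set $M$ cannot distinguish. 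Without this input (or an equivalent classification), "$Y$ fixed, sign of $e$, and $M$ determine the fibration" is an assertion, not a proof.

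Two secondary points. First, your justification for uniqueness of the fibration when $Y$ is not a lens space is misstated: prism manifolds admit two non-isomorphic Seifert fibrations and have nonzero Euler number, so it is not true that all multiply-fibered manifolds other than $S^3$ and lens spaces force $e=0$. The correct statement (Orlik--Vogt--Zieschang, as used in the paper) is that for prism and the relevant Euclidean manifolds the second fibration has \emph{non-orientable} base, which cannot arise from a Besse contact form since $\diff\lambda$ descends to an area form; your setup does give orientable bases, so the conclusion survives, but the reasoning needs repair. Second, your remedy when the two contact orientations of $Y$ disagree — "note that $Y$ admits an orientation-reversing diffeomorphism and absorb it into $\lambda_2$" — is unjustified: most Seifert manifolds with $e\neq0$ admit no orientation-reversing diffeomorphism. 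The correct argument (the paper's Lemma on non-lens spaces, via Lisca--Mati\'c, i.e.\ via the sign constraint you also derive from $\diff\lambda_i=\pi_i^*\omega_i$) is that the orientations \emph{cannot} disagree: otherwise the same fibration would carry Seifert invariants with $\sum_j\beta_j/\alpha_j$ both positive and negative, a contradiction; and for lens spaces the orientation issue is settled fiber-by-fiber through the explicit invariants. With the lens-space classification supplied and these two points corrected, your proof would go through along essentially the paper's lines.
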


In the Zoll case, Theorem~\ref{t:classification} was proved by Abbondandolo et al.\ \cite{Abbondandolo:2017xz, Abbondandolo:2018fb} for $S^3$ and $\mathrm{SO}(3)$, and by Benedetti-Kang \cite[Lemma~2.3]{Benedetti:2018ys} for general $S^1$-bundles over closed surfaces.

\begin{rem}
\label{r:nottrue}
Theorems~\ref{t:classification} and~\ref{t:Besse} in combination provide a spectral recognition result: the contact form of a fixed closed connected 3-manifold can be recovered from its prime action spectrum, provided its action spectrum has rank 1.  In higher rank, however, the same cannot in general be done.  For example, in \cite[Theorem 1.2]{Albers:2018rq} Albers-Geiges-Zehmisch construct a contact form $\lambda$ on $S^3$ whose Reeb flow has a dense orbit and only two closed orbits. The minimal periods $ a$ and $b$ of these two orbits are rationally independent. So, the action spectrum $\sigma(S^3,\lambda)$ is the same as $\sigma(\partial E(a,b),\lambda_{\mathrm{std}})$, but there is no diffeomorphism $\psi:S^3\to \partial E(a,b)$ such that $\lambda=\psi^*\lambda_{\mathrm{std}}$.
\end{rem}

\subsection{Finsler geometry}

Theorem~\ref{t:Besse} and Corollary~\ref{c:Zoll} apply in particular to Finsler geodesic flows of 2-spheres. We recall that a Finsler metric on a closed manifold $M$ is a continuous function $F:\Tan M\to[0,\infty)$ that is smooth outside the $0$-section, fiberwise positively homogeneous of degree $1$, and such that $\partial_{vv}F^2(x,v)$ is positive definite at every point $(x,v)$ outside the $0$-section. The Finsler metric $F$ is reversible when $F(x,v)=F(x,-v)$ for all $(x,v)\in\Tan M$, and Riemannian when it is of the form $F(x,v)=g_x(v,v)^{1/2}$ for some Riemannian metric $g$ on $M$. The geodesic flow of $(M,F)$ is precisely the Reeb flow of $(SM,\lambda)$, where $\pi:SM\to M$ is the $F$-unit tangent bundle of $M$ and $\lambda$ is the Liouville form $\lambda_{(x,v)}(w)=\partial_vF(x,v)\diff\pi(x,v)w$.
The action spectrum $\sigma(SM,\lambda)$ is the usual length spectrum of $(M,F)$, and is denoted by $\sigma(M,F)$. The Finsler metric $F$ is Besse or Zoll if the associated Liouville form $\lambda$ is so.

In \cite{Mazzucchelli:2018ek}, the second author and Suhr established (a slightly stronger version of) Corollary~\ref{c:Zoll} for geodesic flows of Riemannian 2-spheres. Theorem~\ref{t:Besse} actually implies the following more general corollaries for Finsler geodesic flows of surfaces.

\begin{cor}
\label{c:finsler}
Let $(M,F)$ be a closed connected orientable Finsler surface. The length spectrum $\sigma(M,F)$ has rank 1 if and only if $M=S^2$ and $F$ is Besse. Moreover, if $F$ is reversible, the length spectrum $\sigma(M,F)$ has rank 1 if and only if $M=S^2$ and $F$ is Zoll.
\end{cor}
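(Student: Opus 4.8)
The strategy is to translate the statement about length spectra of Finsler surfaces into the contact-geometric language of Theorem~\ref{t:Besse}, applied to the unit tangent bundle $(SM,\lambda)$, and then to feed in classical facts about Besse Finsler surfaces. First I would recall that, for a closed connected orientable Finsler surface $(M,F)$, the unit tangent bundle $SM$ is a closed connected $3$-manifold and the Liouville form $\lambda$ is a contact form on it whose Reeb flow is the geodesic flow; the length spectrum $\sigma(M,F)$ equals the action spectrum $\sigma(SM,\lambda)$ by construction. Hence, by Theorem~\ref{t:Besse} (equivalence of (i) and (iii)), $\sigma(M,F)$ has rank $1$ if and only if $\lambda$ is Besse, i.e.\ if and only if $F$ is Besse. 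This already gives the ``if and only if $F$ is Besse'' half of the first assertion; what remains for that assertion is to show that a Besse Finsler surface must be $S^2$. For this I would invoke the classical topological restriction on Besse (more generally, ``all geodesics closed'', or ``$P$-manifold'') metrics in dimension $2$: a closed surface all of whose geodesics are closed must have finite fundamental group, hence is $S^2$ (or $\RP^2$, excluded here by the orientability hypothesis). This goes back to the structure theory surveyed in Besse's book \emph{Manifolds all of whose geodesics are closed}; in the Finsler setting it follows for instance from the fact that the geodesic flow being periodic forces $SM$ to be a Seifert-fibered space with the orbits as fibers, so $SM$ has finite fundamental group only when $M = S^2$, and one checks the torus and higher-genus cases are excluded because their geodesic flows have orbits of unbounded length (e.g.\ the flat torus, or more robustly, any metric on a surface of nonpositive Euler characteristic has a non-closed geodesic by Gauss–Bonnet / the presence of a minimal geodesic in a nontrivial free homotopy class whose iterates are distinct primitive classes, violating rank~$1$).

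For the reversible case, I would combine the first part with Corollary~\ref{c:Zoll}. If $F$ is reversible and $\sigma(M,F)$ has rank $1$, then by the first part $M=S^2$ and $F$ is Besse. It then suffices to show that a reversible Besse Finsler metric on $S^2$ is in fact Zoll, i.e.\ that all its closed geodesics have the same minimal period (length). By Corollary~\ref{c:Zoll}, this is equivalent to showing that the prime length spectrum $\sigmap(S^2,F)$ is a single point. Here is where reversibility enters: on $S^2$, every Besse metric has the property that its geodesics sweep out a Seifert fibration of $SM$; by Wadsley's theorem there is a common period $\tau$, and the possible minimal periods are $\tau, \tau/2, \tau/3, \dots$ corresponding to exceptional fibers with isotropy. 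The key classical input (due to Gromoll–Grove for Riemannian $S^2$, and extended by Radeschi–Wilking and others) is that a Besse $2$-sphere has at most two ``shorter'' geodesics, forming a specific combinatorial pattern; and reversibility rules out the short exceptional orbits because a geodesic of length $\tau/2$ would, under the reversing involution, be forced to coincide with its time-reversal in a way incompatible with being a genuinely shorter closed geodesic — concretely, reversibility symmetrizes the situation so that the only Besse patterns surviving are the Zoll one. Alternatively, and more self-containedly, I can argue: by the already-established Theorem~\ref{t:classification}, a Besse reversible $F$ on $S^2$ has the same prime length spectrum as \emph{some} known model, and the reversible models are exactly the Zoll ones (the round sphere and its Zoll deformations), so $\sigmap$ is a point and $F$ is Zoll. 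The converse directions (Zoll $\Rightarrow$ Besse $\Rightarrow$ rank $1$) are immediate from the definitions and the already-cited Wadsley–Sullivan argument.

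The part I expect to be the main obstacle is the reversible implication ``Besse $+$ reversible $\Rightarrow$ Zoll'' on $S^2$: ruling out the exceptional (shorter) closed geodesics requires knowing the structure of Besse $2$-spheres rather precisely, and showing that the time-reversing symmetry is genuinely incompatible with the existence of the two short geodesics that a general (non-reversible) Besse sphere may carry. I would handle this by examining the Seifert invariants of the fibration of $SM$ by closed geodesics: reversibility induces an orientation-preserving free involution $\iota$ on $SM$ (the antipodal map on fibers) commuting with the geodesic flow, hence descending to the base orbifold; an exceptional orbit of multiplicity $k\geq 2$ would give a cone point of order $k$ fixed by the induced map, and a short-orbit count argument (two cone points of orders $p,q$, as in the Riemannian classification) combined with the $\iota$-invariance forces $p=q=1$, i.e.\ no exceptional fibers, which is precisely the Zoll condition. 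The remaining steps — identifying $\sigma(SM,\lambda)$ with $\sigma(M,F)$, excluding surfaces other than $S^2$, and the two trivial converse directions — are routine given Theorem~\ref{t:Besse} and Corollary~\ref{c:Zoll}.
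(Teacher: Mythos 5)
Your first half follows essentially the paper's route: identify $\sigma(M,F)$ with $\sigma(SM,\lambda)$, apply Theorem~\ref{t:Besse} to get ``rank $1$ $\Leftrightarrow$ Besse'', and then use a Bott--Samelson-type topological restriction to force $M=S^2$ (the paper cites the Finsler version due to Frauenfelder--Labrousse--Schlenk; your sketch of excluding the torus and higher genus is looser but aims at the same classical fact, and for the statement as posed a citation suffices).

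The genuine gap is in the reversible half, i.e.\ the implication ``Besse $+$ reversible on $S^2$ $\Rightarrow$ Zoll''. This is not something that falls out of a short symmetry argument: it is precisely the theorem of Frauenfelder--Lange--Suhr (the Finsler generalization of Gromoll--Grove) that the paper invokes, and you neither cite it (Gromoll--Grove covers only the Riemannian case, Radeschi--Wilking the higher-dimensional Riemannian one) nor give a valid substitute. Your sketched argument has concrete problems: the flip $\iota(v)=-v$ on $SM$ does not commute with the geodesic flow but conjugates it to its time reversal, $\phi^t\circ\iota=\iota\circ\phi^{-t}$ (it still permutes orbits when $F$ is reversible, but the setup is not the one you describe); more importantly, the induced involution on the quotient orbifold can perfectly well fix the one or two cone points of a Besse $2$-sphere, or swap two cone points of equal order, so ``$\iota$-invariance forces $p=q=1$'' is an unproven assertion -- ruling out the exceptional fibers is exactly the hard content of Gromoll--Grove and Frauenfelder--Lange--Suhr, and the Katok examples show the exclusion must genuinely use reversibility in a nontrivial way. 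Your alternative route via Theorem~\ref{t:classification} is circular: that theorem only says two Besse forms with the \emph{same} prime spectrum are diffeomorphic; to match your $F$ with a Zoll model you would already need to know $\sigmap$ is a singleton (equivalently, that reversible Besse models are Zoll), which is the statement being proved. So as written the second assertion of the corollary is not established; citing Frauenfelder--Lange--Suhr, as the paper does, closes the gap.
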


\begin{rem}
The reversibility assumption in the second part of this statement is essential. Indeed, certain of the so-called Katok's metrics on the 2-sphere \cite{Ziller:1983rw} are examples of non-reversible Finsler metrics that are Besse but not Zoll.
\end{rem}

\begin{proof}[Proof of Corollary~\ref{c:finsler}]
The fact that the length spectrum of a Finsler closed connected surface has rank 1 if and only if the metric is Besse follows from Theorem~\ref{t:Besse}. A theorem due to Frauenfelder-Labrousse-Schlenk \cite{Frauenfelder:2015sn}, which extends the classical Bott-Samelson Theorem \cite{Bott:1954aa, Samelson:1963aa} from Riemannian geometry, implies that $F$ can be Besse only if the fundamental group of $M$ is finite and the integral cohomology ring of the universal cover of $M$ agrees with that of a compact rank-one symmetric space. The only closed orientable surface $M$ with these properties is $S^2$. Finally, a Besse reversible Finsler metric on $S^2$ is Zoll according to a theorem of Frauenfelder-Lange-Suhr \cite{Frauenfelder:2016ud}, which generalizes the classical Riemannian result of Gromoll-Grove \cite{Gromoll:1981kl}.
\end{proof}

\begin{cor}
\label{c:RP2}
Let $(M,F)$ be a closed connected non-orientable Finsler surface. The length spectrum $\sigma(M,F)$ has rank 1 if and only if $M=\RP^2$ and $F$ is Besse. Moreover, if $F$ is Riemannian, the length spectrum $\sigma(M,F)$ has rank 1 if and only if $M=\RP^2$ and $F$ is Riemannian with constant curvature (in particular, $F$ is Zoll).
\end{cor}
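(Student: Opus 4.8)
The plan is to mimic the proof of Corollary~\ref{c:finsler}, replacing the orientable inputs with their non-orientable analogues. First I would note that, $M$ being a connected surface, the $F$-unit tangent bundle $SM$ is a closed connected 3-manifold, so by Theorem~\ref{t:Besse} the length spectrum $\sigma(M,F)=\sigma(SM,\lambda)$ has rank 1 if and only if the Liouville contact form $\lambda$ is Besse, i.e.\ if and only if $F$ is Besse. It therefore suffices to identify which closed connected non-orientable Finsler surfaces are Besse, and to prove the rigidity statement in the Riemannian case.

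For the topological identification I would invoke, just as in Corollary~\ref{c:finsler}, the Frauenfelder-Labrousse-Schlenk extension \cite{Frauenfelder:2015sn} of the Bott-Samelson theorem: if $F$ is Besse then $\pi_1(M)$ is finite and the integral cohomology ring of the universal cover of $M$ coincides with that of a compact rank-one symmetric space. The only closed surfaces with finite fundamental group are $S^2$ and $\RP^2$, and among these $\RP^2$ is the only non-orientable one, its universal cover $S^2$ trivially satisfying the cohomological condition. Hence $M=\RP^2$. Conversely, if $M=\RP^2$ and $F$ is Besse, then $\sigma(M,F)$ has rank 1 by Theorem~\ref{t:Besse}; this gives the first equivalence.

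For the Riemannian refinement, the extra ingredient is a rigidity theorem of Pries \cite{Pries:2009aa}, to the effect that a Riemannian metric on $\RP^2$ all of whose geodesics are closed has constant curvature. Granting this, if $F$ is Riemannian with $\sigma(M,F)$ of rank 1, then the first part forces $M=\RP^2$ and $F$ Besse, so $F$ has constant curvature; and since a constant curvature metric on $\RP^2$ is a quotient of a round sphere, all of whose prime closed geodesics have the same length, such an $F$ is Zoll. Conversely, a constant curvature metric on $\RP^2$ has all geodesics closed, hence is Besse, so $\sigma(\RP^2,F)$ has rank 1 once more by Theorem~\ref{t:Besse}.

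The only genuinely non-routine step is Pries's rigidity theorem, which is also the point where the non-orientable case departs sharply from the orientable one: unlike the Zoll surfaces of revolution on $S^2$, Besse Riemannian metrics on $\RP^2$ are automatically round. One could instead try to lift a Besse Riemannian metric on $\RP^2$ to a Besse Riemannian metric on $S^2$ invariant under a free isometric involution and argue via the $S^2$ theory, but since Besse metrics on $S^2$ need not be round, this would still require ruling out, say, the coexistence of prime closed geodesics of two different lengths on $\RP^2$, which is exactly the content of \cite{Pries:2009aa}; so appealing to it directly is the most economical route.
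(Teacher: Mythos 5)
Your proof is correct, but it takes a different route from the paper. The paper does not rerun the Bott--Samelson-type argument in the non-orientable setting: it passes to the orientation double cover $M'$ with lifted metric $F'$, observes that $M'=S^2$ iff $M=\RP^2$, that $F'$ is Besse iff $F$ is, and that the elementary inclusions $\sigma(M',F')\subseteq\sigma(M,F)$ and $2\sigma(M,F)\subseteq\sigma(M',F')$ force the two length spectra to have the same rank; the first equivalence is then read off from Corollary~\ref{c:finsler} as a black box, and Pries's theorem finishes the Riemannian case exactly as you do. You instead apply Theorem~\ref{t:Besse} directly to $(SM,\lambda)$ (legitimate: $SM$ is a closed connected 3-manifold even for non-orientable $M$, and any contact 3-manifold is orientable) and invoke Frauenfelder--Labrousse--Schlenk directly on the non-orientable surface to get $M=\RP^2$. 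This is fine, but it silently uses that the FLS contact Bott--Samelson theorem holds without any orientability hypothesis on $M$ -- true, since it is a statement about Reeb flows on spherizations of arbitrary closed manifolds (and the classical Bott--Samelson conclusion already allows $\pi_1=\Z/2\Z$), though the paper only quoted it in the orientable setting of Corollary~\ref{c:finsler}. What each approach buys: yours avoids the covering-space bookkeeping and the spectral inclusions; the paper's reduction needs no re-examination of the hypotheses of FLS and reuses Corollary~\ref{c:finsler} verbatim. Your closing remark that constant curvature on $\RP^2$ gives a Zoll metric, and that Pries's rigidity is the genuinely non-routine input, matches the paper's use of \cite{Pries:2009aa}.
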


\begin{proof}
Let $M'$ be the orientation double cover of $M$, and $F':\Tan M'\to[0,\infty)$ the lift of $F$. By Corollary~\ref{c:finsler}, $F'$ is Besse if and only if $\sigma(M',F')$ has rank 1 and $M'=S^2$. Notice that $M'=S^2$ if and only if $M=\RP^2$. The length spectra satisfy $\sigma(M',F')\subseteq\sigma(M,F)$ and $2\sigma(M,F)\subseteq\sigma(M',F')$; in particular,  $\sigma(M',F')$ has rank 1 if and only if the same is true for $\sigma(M,F)$. Moreover, $F'$ is Besse if and only if the same if true for $F$. 
This proves the first part of the statement. Finally, a Riemannian metric on $\RP^2$ is Besse if and only if it has constant curvature, according to a theorem of Pries \cite{Pries:2009aa}.
\end{proof}

\subsection{Relationship with previous work and organization of the paper}

A corollary of Theorem~\ref{t:Besse} is that any contact form on a closed 3-manifold has at least two distinct closed embedded Reeb orbits.  This was previously proved by the first author and Hutchings \cite{Cristofaro-Gardiner:2016rp} using embedded contact homology.  Our proof of Theorem~\ref{t:Besse} uses a similar method; the main difference here is a strengthening of one of the key lemmas in that paper, see our Lemma~\ref{l:Besse_spectral} below.  In contrast, the proof of Theorem~\ref{t:classification} does not require embedded contact homology, but instead makes use of the classification of Seifert fibered spaces, in combination with a Moser trick in Lemma~\ref{l:Moser_trick}.

The paper is organized as follows.  In Section~\ref{s:ECH_1} we provide the needed background on embedded contact homology. In Section~\ref{s:ECH_2}, we prove our main Theorem~\ref{t:Besse}; in the proof, we will need a slightly stronger version of the bumpy contact form theorem, which we state and prove in Appendix~\ref{a:bumpy}. In Section~\ref{s:Seifert}, after introducing the needed preliminaries on Seifert fibered spaces, we prove Theorem~\ref{t:classification}.

\subsection*{Acknowledgments}
The authors are grateful to the anonymous referee for her/his careful reading of the manuscript, and for pointing out the statement of Corollary~\ref{c:RP2}.
Daniel Cristofaro-Gardiner is partially supported by the National Science Foundation under Grant No.~1711976. Marco Mazzucchelli is partially supported by the National Science Foundation under Grant No.~DMS-1440140 while in residence at the Mathematical Sciences Research Institute in Berkeley, California, during the Fall 2018 semester.

\section{Background on Embedded Contact Homology}
\label{s:ECH_1}

In this section we will recall the essential features of embedded contact homology that will be needed in order to prove Theorem~\ref{t:Besse}. The interested reader will find a detailed account and precise references in Hutchings' survey \cite{Hutchings:2014qf}.

\subsection{The chain complex}
Let $(Y,\xi)$ be a closed connected oriented contact manifold of dimension 3. Throughout this paper, the contact distribution $\xi\subset\Tan Y$ is assumed to be cooriented, and as usual we will call a 1-form $\lambda$ on $Y$ a supporting contact form of $\xi$ when $\ker(\lambda)=\xi$ and $\lambda$ induces the orientation of $\Tan Y/\xi$. The 2-form $\diff\lambda$ will then induce an orientation on $\xi$. The contact form $\lambda$ is called bumpy when, for each $\tau>0$ and $z\in\fix(\phi_\lambda^\tau)$, 1 is not an eigenvalue of the linearized Poicar\'e map $\diff\phi_\lambda^\tau(z)|_\xi$.
We will write the symplectization of our contact manifold as $(\R\times Y,\diff(e^s\lambda))$, where $s$ is the variable on $\R$. The embedded contact homology group $\ECH(Y)$ is a topological invariant obtained as the homology of a chain complex $\big(\ECC(Y,\lambda),\partial_{Y,\lambda,J}\big)$, where $\lambda$ is a bumpy supporting contact form of $(Y,\xi)$, and $J$ is an almost complex structure on $(\R\times Y,\diff(e^s\lambda))$ such that $JR_\lambda=\tfrac\partial{\partial s}$, $J\xi=\xi$, $\diff\lambda(v,Jv)>0$ for each non-zero $v\in\xi$, and $J$ is chosen generically in order to satisfy suitable technical assumptions. The chain group $\ECC(Y,\lambda)$ is the $\Z_2$-vector space freely generated by finite sets of pairs $\{(m_i,\gamma_i)\ |\ i=1,...,k \}$, where $k\in\N$, the $\gamma_i$ are distinct simple closed orbits of the Reeb flow $\phi_\lambda^t$, and $m_i$ is a positive integer required to be equal to 1 if $\gamma_i$ is hyperbolic. Here, by ``simple'' we mean that the closed Reeb orbits $\gamma_i$ are viewed as maps of the form $\gamma_i:\R/\tau_i\Z\to Y$, where $\tau_i>0$ is the minimal period of $\gamma_i$. Two simple closed Reeb orbits $\gamma_i,\gamma_j$ are distinct if they are not of the form $\gamma_i=\gamma_j(\cdot+s)$ for any $s>0$.
 The definition of the differential $\partial_{Y,\lambda,J}$ involves counting certain $J$-holomorphic curves in the symplectization of $(Y,\lambda)$, but will not be needed in the present paper.

\subsection{The $U$ map}
The embedded contact homology comes equipped with an endomorphism
\begin{align*}
U:\ECH(Y)\to\ECH(Y)
\end{align*}
defined as follows. Let $\bm\gamma=\{(m_i,\gamma_i)\ |\ i=1,...,k\}$ and $\bm\zeta=\{(n_i,\zeta_i)\ |\ i=1,...,l\}$ be two chains in $\ECC(Y,\lambda)$.  Let $(\Sigma,j)$ be a punctured Riemann surface, and $u:\Sigma\to\R\times Y$ a $J$-holomorphic curve that is asymptotic as a current to $\sum_i m_i\gamma_i$ and $\sum_i n_i\zeta_i$ as $s\to\infty$ and $s\to-\infty$ respectively. 
We denote by $\M(J,\bm\gamma,\bm\zeta)$ the space of such $J$-holomorphic curves modulo equivalence as currents. 
Notice that, for every $u\in\M(J,\bm\gamma,\bm\zeta)$, we have
\begin{align*}
\int_\Sigma u^*\diff\lambda = \sum_{i=1}^k m_i\A_\lambda(\gamma_i) - \sum_{i=1}^{l} n_i\A_\lambda(\zeta_i).
\end{align*}
Here, $\A_\lambda$ denotes the contact action
\begin{align*}
 \A_\lambda(\gamma)=\int_\gamma \lambda.
\end{align*}
If $\gamma$ is a simple closed Reeb orbit, $\A_\lambda(\gamma)$ is simply its minimal period. To every $u\in\M(J,\bm\gamma,\bm\zeta)$ there is an associated integer which is called the ECH-index, and whose definition will not be needed in the present paper. For a given $z\in Y$, we denote by $\M_{2,z}(J,\bm\gamma,\bm\zeta)\subset\M(J,\bm\gamma,\bm\zeta)$ the subset of those $u:\Sigma\to\R\times Y$ having ECH-index 2 and whose image $u(\Sigma)$ passes through $(0,z)$. The condition on the ECH index implies that, if $J$ is chosen generically, then $\M_{2,z}(J,\bm\gamma,\bm\zeta)$ is a finite set. The endomorphism 
\begin{align*}
U_z:\ECC(Y,\lambda)\to \ECC(Y,\lambda),
\qquad
U_z(\bm\gamma)= \!\!\!\! \sum_{\bm\zeta\in\ECC(Y,\lambda)} \!\!\!\! (\# \M_{2,z}(J,\bm\gamma,\bm\zeta)\ \mathrm{mod}\ 2)\,\bm\zeta
\end{align*}
turns out to be a chain map that induces the endomorphism $U$ in embedded contact homology. Notice that $U_z$ depends on the chosen point $z$, on the contact form $\lambda$, and on the almost complex structure $J$, whereas $U$ is a topological invariant of $Y$.

\subsection{Spectral invariants}

Given a supporting contact form $\lambda$ on a closed contact 3-manifold $(Y,\xi)$, we denote by $\Sigma(Y,\lambda)\subset(0,\infty)$ the set of real numbers that are finite sums of elements in the action spectrum $\sigma(Y,\lambda)$, i.e.
\begin{align*}
\Sigma(Y,\lambda)=\big\{\tau_1+...+\tau_k\ \big|\ k\geq 1,\ \tau_i\in\sigma(Y,\lambda)\quad \forall i=1,...,k \big\}.
\end{align*}
The chain complex $(\ECC(Y,\lambda),\partial_{Y,\lambda,J})$ can be filtered by means of the action as follows. For each $\tau>0$, let $\ECC^\tau(Y,\lambda)$ be the vector subspace of $\ECC(Y,\lambda)$ generated by those $\bm\gamma=\{(m_i,\gamma_i)\ |\ i=1,...,k\}$ such that
\begin{align*}
\A_\lambda(\bm\gamma):=\sum_{i=1}^k m_i\A_\lambda(\gamma_i) \leq\tau.
\end{align*}
Since the boundary map $\partial_{Y,\lambda,J}$ does not increase the action, $\big(\ECC^\tau(Y,\lambda),\partial_{Y,\lambda,J}\big)$ is a subcomplex of $\big(\ECC(Y,\lambda),\partial_{Y,\lambda,J}\big)$, whose homology is denoted by $\ECH^\tau(Y,\lambda)$. As the notation suggests, this latter group turns out to be independent of the almost complex structure $J$.  There is an inclusion induced map
\[ \iota^\tau: \ECH^\tau(Y,\lambda) \to \ECH(Y) .\]
Each non-zero $\sigma\in \ECH(Y)$ defines a spectral invariant $c_\sigma(Y,\lambda)\in\Sigma(Y,\lambda)$ as follows. If $\lambda$ is bumpy, then $c_\sigma(Y,\lambda)$ is the minimal $\tau>0$ such that $\sigma$ admits a representative in $\ECC^\tau(Y,\lambda)$, in other words such that $\sigma$ is in the image of the map $\iota^\tau$. 
If $\lambda$ is not bumpy, we can choose a sequence of smooth functions $b_n:Y\to\R,$ $C^0$-converging to zero and such that each contact form $e^{b_n}\lambda$ is bumpy (see Proposition~\ref{p:bumpy}); in this case, the sequence $c_\sigma(Y,e^{b_n}\lambda)$ converges and the spectral invariant $c_\sigma(Y,\lambda)$ is defined as its limit, i.e.
\begin{align}
\label{e:lim_c}
c_\sigma(Y,\lambda) = \lim_{n\to\infty} c_\sigma(Y,e^{b_n}\lambda). 
\end{align}
The following statement due to the first author and Hutchings provides the only property of spectral invariants needed in this paper.  It is an application of the Volume Property for the ECH spectrum proved in \cite{Cristofaro-Gardiner:2015wa}.

\begin{lem}[Cor.~2.2 in \cite{Cristofaro-Gardiner:2016rp}]
\label{l:CGH}
There exists a sequence $\{\sigma_k\ |\ k\in\N\}$ of non-zero elements in $\ECH(Y)$ such that $U\sigma_{k+1}=\sigma_k$ and  $c_{\sigma_k}(Y,\lambda)/k\to0$ as $k\to\infty$ for each supporting contact form $\lambda$ of $(Y,\xi)$.
\hfill\qed
\end{lem}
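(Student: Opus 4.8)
The plan is to derive Lemma~\ref{l:CGH} from two ingredients: the purely topological fact that $\ECH(Y)$ carries an infinite sequence of nonzero classes linked by the $U$ map, and the Volume Property for the ECH spectrum established in \cite{Cristofaro-Gardiner:2015wa}, which supplies all the hard analysis. The essential point is that the sequence $\{\sigma_k\}$ can be fixed once and for all, independently of the contact form; this is possible because $\ECH(Y)$ and the endomorphism $U$ are invariants of $Y$ alone, so the dependence on $\lambda$ enters only through the spectral invariants $c_{\sigma_k}(Y,\lambda)$.

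First I would produce the sequence. By Taubes' isomorphism between embedded contact homology and Seiberg--Witten Floer cohomology, combined with the structure of the latter due to Kronheimer--Mrowka, the group $\ECH(Y)$ contains, in gradings above some threshold, a chain of nonzero $\Z_2$-vector spaces on which $U$ acts as an isomorphism. Choosing a nonzero class there and repeatedly taking $U$-preimages yields a sequence $\sigma_0,\sigma_1,\sigma_2,\dots$ of nonzero elements of $\ECH(Y)$ with $U\sigma_{k+1}=\sigma_k$ for all $k\in\N$. This is exactly the structural input that makes the ECH capacities $c_k(Y,\lambda)$ well defined and finite, and since it concerns only $\ECH(Y)$ and $U$, the sequence $\{\sigma_k\}$ does not depend on the choice of supporting contact form of $(Y,\xi)$.

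Next I would fix an arbitrary supporting contact form $\lambda$ of $(Y,\xi)$ and apply the Volume Property to this sequence. It gives
\[
\lim_{k\to\infty}\frac{c_{\sigma_k}(Y,\lambda)^2}{k}=2\,\mathrm{vol}(Y,\lambda),\qquad \mathrm{vol}(Y,\lambda):=\int_Y\lambda\wedge\diff\lambda,
\]
which is a finite positive number. In particular the sequence $k\mapsto c_{\sigma_k}(Y,\lambda)^2/k$ is bounded, say by a constant $M_\lambda$, and hence
\[
0\le\frac{c_{\sigma_k}(Y,\lambda)}{k}=\sqrt{\frac{c_{\sigma_k}(Y,\lambda)^2}{k}\cdot\frac1k}\le\sqrt{\frac{M_\lambda}{k}}\longrightarrow 0\qquad\text{as }k\to\infty,
\]
which is precisely the conclusion of the lemma. (Note that $c_{\sigma_k}(Y,\lambda)$ itself diverges, growing like $\sqrt{k}$; the content of the estimate is only that this growth is sublinear in $k$.)

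The only genuinely deep ingredient is the Volume Property itself, which I would treat as a black box imported from \cite{Cristofaro-Gardiner:2015wa} --- its proof, a Weyl-law type asymptotic for the ECH spectrum, is the main result of that paper. The one routine subtlety to dispatch is that spectral invariants, and so the Volume Property, are cleanest for bumpy contact forms whereas Lemma~\ref{l:CGH} is stated for every $\lambda$: this is handled using the continuity of $c_\sigma$ under $C^0$-small changes of the contact form, as encoded in the defining limit~\eqref{e:lim_c}, together with the obvious continuity of $\lambda\mapsto\mathrm{vol}(Y,\lambda)$, which together let one transfer the bound on $c_{\sigma_k}(Y,\cdot)^2/k$ from a sequence of bumpy forms $e^{b_n}\lambda$ to $\lambda$.
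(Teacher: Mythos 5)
Your argument is correct and is essentially the paper's approach: the paper does not reprove Lemma~\ref{l:CGH} but imports it from \cite{Cristofaro-Gardiner:2016rp}, where the proof is precisely your combination of a $U$-tower of nonzero classes (obtained via Taubes' isomorphism with Seiberg--Witten Floer cohomology and Kronheimer--Mrowka's structure results) with the Volume Property of \cite{Cristofaro-Gardiner:2015wa}, giving $c_{\sigma_k}\sim\sqrt{k}$ and hence sublinear growth. The only detail you elide is that the tower must be taken in a summand $\ECH(Y,\xi,\Gamma)$ with $c_1(\xi)+2\,\mathrm{PD}(\Gamma)$ torsion (such $\Gamma$ exists because $c_1(\xi)$ is even on a closed oriented $3$-manifold), a point the paper's notation likewise suppresses.
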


\section{ECH-spectral characterization of Besse contact forms}
\label{s:ECH_2}

The following statement, which improves \cite[Lemma~3.1(b)]{Cristofaro-Gardiner:2016rp} while following a similar logic, is the main ingredient for proving Theorem~\ref{t:Besse}.  
\begin{lem}
\label{l:Besse_spectral}
Let $(Y,\lambda)$ be a closed connected contact 3-manifold equipped with a contact form.
If $c_\sigma(Y,\lambda)=c_{U\sigma}(Y,\lambda)$ for some $\sigma\in\ECH(Y)$ with $U\sigma\neq 0$, then $(Y,\lambda)$ is Besse.
\end{lem}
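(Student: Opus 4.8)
The plan is to argue by contradiction: suppose $(Y,\lambda)$ is not Besse, so there is a point $z_0 \in Y$ whose Reeb orbit is not closed. The key geometric input is that the $U$ map can be realized by the chain-level map $U_{z}$ for \emph{any} point $z$, and for a generic (bumpy) contact form and generic $J$ the moduli space $\M_{2,z}(J,\bm\gamma,\bm\zeta)$ counted by $U_z$ consists of $J$-holomorphic cylinders (or low-complexity curves) whose projection to $Y$ sweeps out the point $z$. If $c_\sigma(Y,\lambda) = c_{U\sigma}(Y,\lambda)$, then at the chain level the action is not dropping: a minimal-action representative of $\sigma$ and a minimal-action representative of $U\sigma$ have the same action $\tau := c_\sigma(Y,\lambda)$. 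Since $\int_\Sigma u^*\diff\lambda = \A_\lambda(\bm\gamma) - \A_\lambda(\bm\zeta) \geq 0$ with equality forcing $u^*\diff\lambda \equiv 0$, the relevant $U$-curve must have vanishing $\diff\lambda$-energy, hence is (a branched cover of) a union of $\R$-invariant cylinders over Reeb orbits. But an $\R$-invariant cylinder over a Reeb orbit, together with the constraint that its image passes through $(0,z)$, forces $z$ to lie on a \emph{closed} Reeb orbit.

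First I would make this precise by reducing to the bumpy case via the limiting definition \eqref{e:lim_c}: choose $b_n \to 0$ in $C^0$ with $e^{b_n}\lambda$ bumpy, so that $c_\sigma(Y,e^{b_n}\lambda) \to c_\sigma(Y,\lambda)$ and likewise for $U\sigma$; one must be careful that the equality of the two spectral invariants for $\lambda$ does not immediately pass to the approximants, so instead I would run the contradiction argument by fixing the non-closed orbit through $z_0$ for $\lambda$ and extracting, for the bumpy approximants $e^{b_n}\lambda$, points $z_n$ near $z_0$ (this is where the stronger bumpy theorem of Appendix~\ref{a:bumpy} — allowing the perturbation to fix a prescribed compact piece of a given orbit — is used) whose orbit is still "long" in a quantitative sense. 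Then for each $n$, realize $U$ by $U_{z_n}$ and analyze the $U_{z_n}$-curve connecting a near-minimal representative of $\sigma$ to one of $U\sigma$.

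Next I would carry out the energy estimate. For the bumpy form $e^{b_n}\lambda$ and generic $J_n$, pick chains $\bm\gamma_n$ representing $\sigma$ with $\A(\bm\gamma_n) \le c_\sigma(Y,e^{b_n}\lambda) + \epsilon_n$ and note that $U_{z_n}\bm\gamma_n$ must contain a term $\bm\zeta_n$ representing $U\sigma$ with $\A(\bm\zeta_n) \ge c_{U\sigma}(Y,e^{b_n}\lambda) - \epsilon_n$. The connecting curve $u_n \in \M_{2,z_n}(J_n,\bm\gamma_n,\bm\zeta_n)$ then has $\int u_n^*\diff(e^{b_n}\lambda) = \A(\bm\gamma_n) - \A(\bm\zeta_n) \le c_\sigma(Y,e^{b_n}\lambda) - c_{U\sigma}(Y,e^{b_n}\lambda) + 2\epsilon_n \to 0$ by hypothesis. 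I would then invoke Gromov compactness (for currents, in the symplectization) to extract a limiting broken holomorphic current of total $\diff\lambda$-energy zero passing through $(0,z_0)$. Any component of such a limit has zero $\diff\lambda$-energy, so is an $\R$-invariant cylinder (with multiplicity) over a closed Reeb orbit of $\lambda$; since the limit passes through $(0,z_0)$, the orbit of $z_0$ is closed, contradicting the choice of $z_0$. Hence $(Y,\lambda)$ is Besse.

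The main obstacle is the compactness/limiting step: making sure that as $n \to \infty$ the contact forms $e^{b_n}\lambda$, the almost complex structures $J_n$, the points $z_n$, and the $U_{z_n}$-curves $u_n$ all converge coherently, and that the ECH-index-$2$ (low complexity) condition is preserved well enough in the limit to conclude the limit is genuinely $\R$-invariant rather than, say, a curve with a nontrivial positive end that happens to have small energy. This is exactly the point where \cite[Lemma~3.1(b)]{Cristofaro-Gardiner:2016rp} needed to be strengthened: the earlier argument only controlled the asymptotics along a subsequence of forms, whereas here one needs, for the \emph{same} point $z_0$, a uniform statement — which is why the refined bumpy perturbation of Appendix~\ref{a:bumpy}, fixing an arbitrarily long compact arc of the orbit through $z_0$, is essential. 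A secondary technical point is ruling out bubbling of ECH-index-$0$ components carrying the point constraint; this is handled exactly as in \cite{Cristofaro-Gardiner:2016rp, Hutchings:2014qf} by the standard ECH index inequalities and the genericity of $J_n$.
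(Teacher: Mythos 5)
Your setup matches the paper's: assume not Besse, fix a point $z$ with non-closed orbit, use the refined bumpy theorem of Appendix~\ref{a:bumpy} to perturb while keeping the contact form (and $J$) fixed on a flow box around an arc of that orbit, pick near-minimal representatives, note that the $\diff\lambda_n$-energy of the connecting $U_z$-curves $C_n$ equals the action drop and hence tends to $0$, and pass to a limit current by Taubes-type compactness. The gap is in your endgame. You conclude that the zero-energy limit is ``an $\R$-invariant cylinder (with multiplicity) over a closed Reeb orbit of $\lambda$'', so that $z$ would lie on a closed orbit. But the only compactness available here is local: $\lambda$ is an arbitrary, possibly highly degenerate contact form (no Morse--Bott or nondegeneracy hypothesis), the forms $\lambda_n$ and structures $J_n$ vary with $n$, and SFT/broken-curve compactness with asymptotics to closed Reeb orbits of the limit form is simply not justified. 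What Taubes' theorem gives (and what the paper uses) is a compact $J$-holomorphic current $C$ with boundary inside a compact piece $[-1,1]\times K$ of the symplectization, passing through $(0,z)$. A zero-$\diff\lambda$-energy such current is locally $\R\times(\text{orbit arc})$; its zero-energy component through $(0,z)$ is just the strip $[-1,1]\times\phi_\lambda^{[-\tau/2,\tau/2]}(z)$ over a \emph{non-closed} orbit arc that exits the flow box. This is a perfectly consistent limit and yields no contradiction by itself, so your argument does not close.

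The paper's actual contradiction is quantitative and is the step your proposal is missing: one chooses the flow box so that the orbit arc through $z$ has $\lambda$-action $\tau$ strictly larger than $c:=c_\sigma(Y,\lambda)$. Since the limit current contains the strip over this arc, convergence as currents forces, for large $n$, slices $C_n\cap(\{s_n\}\times K)$ to have $\lambda_n$-length at least some $\tau'\in(c,\tau)$. On the other hand, positivity of $\lambda_n$ and $\diff\lambda_n$ along $C_n$ bounds any slice length above by the total action $\A_{\lambda_n}(\bm\gamma_n)$, which converges to $c<\tau'$. That comparison, not the closedness of the orbit through $z$, is what produces the contradiction; without it (or some substitute ruling out the ``strip over a long non-closed arc'' scenario), the zero-energy conclusion you draw is unavailable.
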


\begin{proof}
Assume that $(Y,\lambda)$ is not Besse, so that there exists $z\in Y$ such that $\phi_\lambda^t(z)\neq z$ for all $t\neq0$. We set $c:=c_\sigma(Y,\lambda)$, and fix an arbitrary real number $\tau>c$. Let $\Sigma\subset Y$ be an embedded compact ball of codimension 1 containing $z$ in its interior and such that $\Tan_z\Sigma=\xi_z$, where $\xi=\ker(\lambda)$ is the contact distribution. Up to shrinking $\Sigma$ around $z$, the map
\begin{align*}
\psi:[-\tau/2,\tau/2]\times\Sigma\to Y,\qquad \psi(t,w)=\phi_\lambda^t(w)
\end{align*}
is a diffeomorphism onto its image $K:=\psi([-\tau/2,\tau/2]\times\Sigma)$. Namely, $K$ is a flow box for the Reeb flow $\phi_\lambda^t$ containing orbits of length $\tau$.

We fix an almost complex structure $J$ on the symplectization $(\R\times Y,\diff(e^s\lambda))$ such that $JR_\lambda=\tfrac{\partial}{\partial s}$, $J\xi=\xi$, and $\diff\lambda(v,Jv)>0$ for all non-zero $v\in\xi$. By Proposition~\ref{p:bumpy}, there exists a sequence $b_n\in C^\infty(Y)$ such that $b_n|_{K}\equiv0$, $b_n\to0$ in $C^0$ and $\lambda_n:=e^{b_n}\lambda$ is a bumpy contact form. Since $\lambda_n\equiv \lambda$ on $K$, this latter set is also a flow box for the Reeb flows $\phi_{\lambda_n}^t$. In particular, none of the closed orbits of $\phi_{\lambda_n}^t$ with minimal period at most $\tau$ intersects $K$. Therefore, we can choose an almost complex structure $J_n$ on the symplectization $(\R\times Y,\diff(e^{s}\lambda_n))$ such that $J_n\equiv J$ on $\R\times K$, and $J_n$ is sufficiently generic to define the differential of the complex $\big(\ECC^\tau(Y,\lambda_n),\partial_{Y,\lambda_n,J_n} \big)$ and the endomorphism $U_z:\ECC^\tau(Y,\lambda_n)\to \ECC^\tau(Y,\lambda_n)$.

We consider an arbitrary cycle $\bm\gamma_n\in\ECC^\tau(Y,\lambda_n)$ such that $\sigma=\iota^\tau([\bm\gamma_n])$ and $c_\sigma(Y,\lambda_n)=\A_{\lambda_n}(\bm\gamma_n)$. Equation~\eqref{e:lim_c} implies that $\A_{\lambda_n}(\bm\gamma_n)\to c_\sigma(Y,\lambda)$ as $n\to\infty$. In order to conclude the proof, we need to show that there exists $\delta>0$ such that
\begin{align*}
\A_{\lambda_n}(\bm\gamma_n)-\A_{\lambda_n}(U_z\bm\gamma_n)\geq\delta,
\qquad\forall n\in\N.
\end{align*}
Indeed, this implies that 
\begin{align*}
c_{U\sigma}(Y,\lambda) 
 = 
\!\lim_{n\to\infty}\! c_{U\sigma}(Y,\lambda_n) \leq   
\!\lim_{n\to\infty}\! \A_{\lambda_n}(U_z\bm\gamma_n)
\leq
\!\lim_{n\to\infty}\!  \A_{\lambda_n}(\bm\gamma_n)-\delta =
c_{\sigma}(Y,\lambda) - \delta.
\end{align*}

Assume by contradiction that 
\begin{align*}
\liminf_{n\to\infty} \big(\A_{\lambda_n}(\bm\gamma_n) -  \A_{\lambda_n}(U_z\bm\gamma_n) \big) = 0.
\end{align*}
Up to extracting a subsequence, we can actually assume that 
\begin{align}
\label{e:zero}
\lim_{n\to\infty} \big(\A_{\lambda_n}(\bm\gamma_n) -  \A_{\lambda_n}(U_z\bm\gamma_n) \big) = 0.
\end{align}
We choose, for each $n\in\N$, a $J_n$-holomorphic curve $u_n:\Sigma_n\to\R\times Y$ in the moduli space $\M_{2,z}(J_n,\bm\gamma_n,U_z\bm\gamma_n)$. We set $C_n:=u_n(\Sigma_n)$, and from now on we will not distinguish between the map $u_n$ and its image $C_n$. Notice that
\begin{align}
\label{e:energy}
\int_{C_n} \diff\lambda_n = \A_{\lambda_n}(\bm\gamma_n) -  \A_{\lambda_n}(U_z\bm\gamma_n),
\end{align}
and in particular this quantity is uniformly bounded in $n$. Since $J_n\equiv J$ on $\R\times K$, the intersections $C_n\cap([-1,1]\times K)$ are $J$-holomorphic curves. Since $\diff\lambda_n=\diff\lambda$ is non-negative on $C_n\cap([-1,1]\times K)$, Equations~\eqref{e:zero} and~\eqref{e:energy} imply that 
\begin{align}
\label{e:zero_Cn}
\lim_{n\to\infty}
\int_{C_n\cap([-1,1]\times K)}  \diff\lambda = 0,
\end{align}
and that this integral is uniformly bounded in $n$. Let $s_0\in[-2,-1]$ and $s_1\in[1,2]$ be such that $u_n$ is transverse to $\{s_0,s_1\}\times Y$. Since both $\diff(e^s\lambda_n)$ and $\diff\lambda_n$ are non-negative on $C_n$ by the conditions on $J_n$, we have the uniform bound
\begin{align*}
\int_{C_n \cap([-1,1]\times K)} \diff(e^s\lambda)
& \leq
\int_{C_n\cap([s_0,s_1]\times Y)} \diff(e^s\lambda_n)\\
&
= 
e^{s_1} \int_{C_n\cap (\{s_1\}\times Y)} \lambda_n
-
e^{s_0} \int_{C_n\cap (\{s_0\}\times Y)} \lambda_n \\
&
\leq
e^2 \bigg( \int_{C_n\cap (\{s_1\}\times Y)} \lambda_n + \int_{C_n\cap([s_1,\infty)\times Y)}\diff\lambda_n\bigg)\\
& =
e^2 \A_{\lambda_n}(\bm\gamma_n) \leq e^2 c_{\sigma}(Y,\lambda) + 1
\end{align*}
for all $n\in\N$ large enough. We can thus employ a compactness result due to Taubes \cite[Prop.~3.3]{Taubes:1998oz}, in its version \cite[Prop.~3.2]{Cristofaro-Gardiner:2016rp}, and infer that, up to extracting a subsequence, the sequence $C_n\cap([-1,1]\times K)$ converges in the sense of currents to a compact $J$-holomorphic curve $C\subset[-1,1]\times K$ with boundary in $\partial([-1,1]\times K)$, and $(0,z)\in C$. Equation~\eqref{e:zero_Cn} thus implies
\begin{align*}
 \int_{C} \diff\lambda =0,
\end{align*}
and therefore $C$ must have a component of the form
$[-1,1]\times\phi_\lambda^{[-\tau/2,\tau/2]}(z)$.
In particular
\begin{align*}
\int_{C \cap (\{s\}\times K)} \lambda \ge \tau,\qquad \forall s\in[-1,1].
\end{align*}
We fix an arbitrary $\tau'\in(c_\sigma(Y,\lambda),\tau)$.
For each $n\in\N$, we choose a point $s_n\in[-1,1]$ such that $u_n$ is transverse to $\{s_n\}\times Y$, and we orient the intersection using the ``$\R$-direction first" convention. By the conditions on $J_n$, the contact form $\lambda_n$ is non-negative along the oriented 1-manifold $C_n \cap (\{s_n\}\times Y)$. Therefore, since $C_n\cap([-1,1]\times K)\to C$ in the sense of currents, up to removing sufficiently many elements from the sequence $\{C_n\ |\ n\in\N\}$ we have
\begin{align*}
\int_{C_n \cap (\{s_n\}\times Y)} \lambda_n \geq \int_{C_n \cap (\{s_n\}\times K)} \lambda_n \geq \tau',\qquad \forall n\in\N.
\end{align*}
However, if we choose $n$ large enough so that $\A_{\lambda_n}(\bm\gamma_n)<\tau'$, we have
\begin{align*}
\int_{C_n \cap (\{s_n\}\times Y)} \lambda_n
\leq
\int_{C_n \cap (\{s_n\}\times Y)} \lambda_n
+
\int_{C_n \cap ([s_n,\infty)\times Y)} \diff\lambda_n
=
\A_{\lambda_n}(\bm\gamma_n)<\tau',
\end{align*}
which gives a contradiction.
\end{proof}

\begin{proof}[Proof of Theorem~\ref{t:Besse}]
We already know that (i) implies (ii) by Wadsley's theorem \cite{Wadsley:1975sp}. Assume now that our closed connected contact 3-manifold $(Y,\xi=\ker(\lambda))$ satifies (ii). We denote by $\tau>0$ a common period for the closed Reeb orbits. Every closed orbit $\gamma$ of the Reeb flow $\phi_\lambda^t$ has minimal period $\tau/k_\gamma$ for some $k_\gamma\in\N=\{1,2,3,...\}$. Since $Y$ is compact and the Reeb vector field of $(Y,\lambda)$ is nowhere vanishing, there is a uniform lower bound for the minimal periods of the closed orbits of $\phi_\lambda^t$. In particular, the set 
\[\K:=\big\{k_\gamma\ \big|\ \gamma \mbox{ closed orbit of }\phi_\lambda^t \big\}\]
is finite. If we denote by $k\in\N$ a common multiple of the natural numbers in $\K$, we readily see that the period of each closed orbit of the Reeb flow $\phi_\lambda^t$ must be a multiple of $\tau/k$. This implies (iii).

Finally, let us assume that $(Y,\lambda)$ satisfies (iii). By Lemma \ref{l:CGH}, there exists a sequence $\{\sigma_k\ |\ k\in\N\}$ of non-zero elements in $\ECH(Y,\xi,\Gamma)$ 
such that $U\sigma_{k+1}=\sigma_k$ and $c_{\sigma_k}(Y,\lambda)/k\to0$ as $k\to\infty$. If $c_{\sigma_{k+1}}(Y,\lambda)\neq c_{\sigma_k}(Y,\lambda)$ for all $k\in\N$, then $c_{\sigma_{k+1}}(Y,\lambda)\geq c_{\sigma_k}(Y,\lambda)+T$, where $T>0$ is such that $\sigma(Y,\lambda)\subset\{nT\ |\ n\in\N\}$. However, this would imply that
\begin{align*}
\liminf_{k\to\infty} c_{\sigma_k}(Y,\lambda)/k \geq T >0,
\end{align*}
which is a contradiction. Therefore we must have $c_{\sigma_{k+1}}(Y,\lambda)= c_{\sigma_k}(Y,\lambda)$ for some (and indeed for infinitely many) $k\in\N$. By Lemma~\ref{l:Besse_spectral}, we conclude that $(Y,\lambda)$ is Besse.
\end{proof}

Recent results of the second author and Suhr, \cite[Theorem~3.1]{Mazzucchelli:2018ek} and \cite[Theorem~1.2]{Mazzucchelli:2018pb}, provide a min-max characterization of certain Zoll Riemannian manifolds by employing Morse-theoretic spectral invariants for the length and energy functionals on the loop space. In the same spirit, the proof of Theorem~\ref{t:Besse} also provides the following ECH-spectral characterization of Besse contact forms.

\begin{thm}
A closed connected contact 3-manifold $(Y,\lambda)$ is Besse if and only if, for some $\sigma\in\ECH(Y)$ with $U\sigma\neq0$, we have $c_{\sigma}(Y,\lambda)=c_{U\sigma}(Y,\lambda)$.
\hfill\qed
\end{thm}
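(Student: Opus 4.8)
The plan is to note that this statement merely repackages Lemma~\ref{l:Besse_spectral} together with the argument already used to prove Theorem~\ref{t:Besse}, so that both implications are essentially in hand. For the ``if'' direction, suppose there exists $\sigma\in\ECH(Y)$ with $U\sigma\neq0$ and $c_\sigma(Y,\lambda)=c_{U\sigma}(Y,\lambda)$. Then Lemma~\ref{l:Besse_spectral} applies directly and gives that $(Y,\lambda)$ is Besse; nothing further is required.

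For the ``only if'' direction, assume $(Y,\lambda)$ is Besse. First I would reprove, as inside the proof of Theorem~\ref{t:Besse}, that the action spectrum $\sigma(Y,\lambda)$ has rank $1$: by Wadsley's theorem the closed Reeb orbits admit a common period $\tau>0$, every closed orbit $\gamma$ has minimal period $\tau/k_\gamma$ for some $k_\gamma\in\N$, compactness of $Y$ and the non-vanishing of $R_\lambda$ give a uniform positive lower bound on minimal periods, so $\K:=\{k_\gamma\mid\gamma\text{ closed orbit of }\phi_\lambda^t\}$ is finite; letting $k\in\N$ be a common multiple of the elements of $\K$ and $T:=\tau/k$, we get $\sigma(Y,\lambda)\subset\{nT\mid n\in\N\}$. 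Next I would invoke Lemma~\ref{l:CGH} to obtain non-zero classes $\{\sigma_k\mid k\in\N\}$ in $\ECH(Y)$ with $U\sigma_{k+1}=\sigma_k$ and $c_{\sigma_k}(Y,\lambda)/k\to0$. Finally, the elementary dichotomy already used in the proof of Theorem~\ref{t:Besse} applies: if $c_{\sigma_{k+1}}(Y,\lambda)\neq c_{\sigma_k}(Y,\lambda)$ for every $k$, then since both lie in $\{nT\mid n\in\N\}$ one has $c_{\sigma_{k+1}}(Y,\lambda)\geq c_{\sigma_k}(Y,\lambda)+T$ for all $k$, whence $\liminf_k c_{\sigma_k}(Y,\lambda)/k\geq T>0$, contradicting Lemma~\ref{l:CGH}. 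Hence $c_{\sigma_{k+1}}(Y,\lambda)=c_{\sigma_k}(Y,\lambda)$ for some (indeed infinitely many) $k$; setting $\sigma:=\sigma_{k+1}$ for such a $k$ gives $U\sigma=\sigma_k\neq0$ and $c_\sigma(Y,\lambda)=c_{U\sigma}(Y,\lambda)$, as desired.

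I do not expect a genuine obstacle here, since every ingredient has already been proved. The only steps deserving attention are the two just highlighted: checking that the Besse hypothesis forces the rank-$1$ condition — which rests on the finiteness of $\K$, i.e.\ on compactness of $Y$ together with the non-vanishing of the Reeb field — and checking that the classes produced by Lemma~\ref{l:CGH} satisfy $U\sigma_{k+1}=\sigma_k$ with $\sigma_k\neq0$, so that the $\sigma$ we extract genuinely has $U\sigma\neq0$. Both are immediate from the statements cited above, which is why the present theorem is stated as a corollary of the proof of Theorem~\ref{t:Besse} rather than requiring a new argument.
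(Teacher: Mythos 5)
Your proposal is correct and follows exactly the route the paper intends: the ``if'' direction is Lemma~\ref{l:Besse_spectral} applied verbatim, and the ``only if'' direction reruns the final step of the proof of Theorem~\ref{t:Besse} (Wadsley plus finiteness of $\K$ to get rank~1, then Lemma~\ref{l:CGH} and the gap-of-size-$T$ dichotomy to force $c_{\sigma_{k+1}}(Y,\lambda)=c_{\sigma_k}(Y,\lambda)$ for some $k$ with $U\sigma_{k+1}=\sigma_k\neq0$). This is precisely why the paper states the theorem with no separate proof, so nothing further is needed.
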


\section{Besse contact forms and Seifert fibrations}
\label{s:Seifert}

\subsection{The Morse-Bott property}
Let us recall that a closed connected Besse contact manifold $(Y,\lambda)$ of any dimension $2n+1\geq3$ has Morse-Bott closed orbits. By the already mentioned Wadsley's Theorem \cite{Wadsley:1975sp}, there exists a minimal $\tau>0$ such that the Reeb flow satisfies $\phi_\lambda^\tau=\id$. Therefore, each point $z\in Y$ lies on a closed Reeb orbit of minimal period $\tau_z=\tau/\alpha_z$, for some $\alpha_z\in\N$. For each $\alpha\in\N$, we define a compact subset \[K_\alpha:=\fix(\phi_\lambda^{\tau/\alpha})\subset Y.\]
Since the Reeb vector field $R_\lambda$ is nowhere vanishing, there exists a finite subset $\F\subset\N$ such that $K_\alpha\neq\varnothing$ if and only if $\alpha\in\F$.
Let $g_0$ be a Riemannian metric on $Y$ such that $g_0(R_\lambda,\cdot)=\lambda$. Its average
\begin{align*}
 g:=\frac1\tau\int_0^\tau (\phi_\lambda^t)^*g_0\,\diff t
\end{align*}
is a Riemannian metric that still satisfies $g(R_\lambda,\cdot)=\lambda$ and is invariant under the Reeb flow, i.e.\ $(\phi_\lambda^t)^*g=g$. Since $\phi_\lambda^{\tau/\alpha}$ is a $g$-isometry, its fixed-point set $K_\alpha$ is a closed submanifold of $Y$ with tangent spaces \[\Tan_zK_\alpha=\ker(\diff\phi_\lambda^{\tau/\alpha}(z)-\id),\] see \cite[Theorem~5.1]{Kobayashi:1995mo}. The linearized map $\diff\phi_\lambda^{\tau/\alpha}(z)|_{\xi_z}$ is a symplectic endomorphism of the symplectic vector space $(\xi_z,\diff\lambda_z|_{\xi_z})$, where $\xi:=\ker(\lambda)$. Therefore, the eigenvalue $1\in\sigma(\diff\phi_\lambda^{\tau/\alpha}(z)|_{\xi_z})$ has even algebraic multiplicity. Since $\diff\phi_\lambda^{\tau/\alpha}(z)|_{\xi_z}$ is an $\alpha$-th root of the identity, this algebraic multiplicity is equal to the geometric multiplicity $\dim\ker(\diff\phi_\lambda^{\tau/\alpha}(z)|_{\xi_z}-\id)$. This, together with the fact that \[\diff\phi_\lambda^{\tau/\alpha}(z)R_\lambda(z)=R_\lambda(z),\] proves that $\dim(\Tan_zK_\alpha)$ is odd, and thus that $K_\alpha$ is an odd-dimensional closed manifold.

\subsection{Seifert fibrations}
We now assume that our Besse closed connected contact manifold $(Y,\lambda)$ has dimension 3. Therefore, the subsets $K_\alpha$ with $\alpha\in\F\setminus\{1\}$ are finite disjoint unions of embedded circles. If $\F\setminus\{1\}\not=\varnothing$, the complement $Y\setminus K$, where $K:=\cup_{\alpha\in\F\setminus\{1\}} K_\alpha$, is an open Zoll contact manifold. The Reeb flow on $Y$ defines a locally free $\R/\tau\Z$-action on $Y$, whose quotient $\Sigma_g$ can be given the structure of a closed orientable surface of some genus $g\geq0$. The quotient map $\pi:Y\to \Sigma_g$ is not a genuine circle bundle if $(Y,\lambda)$ is not Zoll, but it is still a Seifert fibration. Namely, if $\{x_1,...,x_r\}:=\pi(K)$,  for each $x_j$ there are associated parameters $\alpha_j,\beta_j,\alpha'_j,\beta'_j\in\Z$ with the following properties. The parameter $\alpha_j\geq 1$ is such that $\pi^{-1}(x_j)\subset K_{\alpha_j}$. Therefore, $\pi^{-1}(x_j)$ is a closed Reeb orbit of minimal period $\tau/\alpha_j$.
Both pairs $(\alpha_j,\beta_j)$, $(\alpha_j',\beta_j')$ are coprime, and form an integer matrix with determinant $\alpha_j\beta_j'-\alpha_j'\beta_j=1$. The point $x_j$ possesses a compact disk neighborhood $D_j\subset\Sigma_g$ that we identify with the unit ball in the complex plane, and there is a diffeomorphism $\psi_j:D_j\times S^1\to\pi^{-1}(D_j)$ such that
\begin{align*}
\pi\circ\psi_j(\rho z_1 ,z_2)=\rho z_1^{\alpha_j} z_2^{\alpha_j'},
\qquad\forall \rho\in[0,1],\ z_1,z_2\in S^1.
\end{align*}
Here and in the following, $S^1$ denotes the unit circle in the complex plane $\C$.  
The Reeb flow induced on $D_j\times S^1$ has the form
\begin{align*}
\psi_j^{-1}\circ\phi_\lambda^t\circ\psi_j(\rho z_1,z_2)=(\rho z_1e^{-i2\pi\alpha_j't/\tau},z_2e^{i2\pi\alpha_jt/\tau}).
\end{align*}
The restriction $\pi:Y\setminus K\to\Sigma_g\setminus\{x_1,...,x_r\}$ is a trivial $S^1$-bundle, that is, there is a diffeomorphism $\psi:\Sigma_g\setminus\{x_1,...,x_r\}\times S^1\to Y\setminus K$ such that $\pi\circ\psi(z_1,z_2)=z_1$. The Reeb flow induced on $\Sigma_g\setminus\{x_1,...,x_r\} \times S^1$ is simply
\begin{align*}
\psi^{-1}\circ\phi_\lambda^t\circ\psi(z_1,z_2)=(z_1,z_2e^{i2\pi t/\tau}).
\end{align*}
We orient $\Sigma_g$ by means of a 2-form $\omega$ on $\Sigma_g\setminus\{x_1,...,x_r\}$ such that $\pi^*\omega=\diff\lambda|_{Y\setminus K}$, and we orient the fibers of $\pi$ by means of the Reeb vector field $R_\lambda$, so that the diffeomorphisms $\psi|_{\{x\}\times S^1}:\{x\}\times S^1\to\pi^{-1}(x)$ are orientation preserving. We introduce the oriented circles in the torus $T_j:=\pi^{-1}(\partial D_j)$
\begin{align*}
M_j&:=\psi_j(\partial D_j\times\{1\}),&
L_j&:=\psi_j(\{1\}\times S^1),\\
M_j'&:=\psi(\partial D_j\times\{1\}),&
L_j'&:=\psi(\{x\}\times S^1),
\end{align*}
where $x$ is any point in $\partial D_j$. In the homology group $\Hom_1(T_j;\Z)$, we have
\begin{align*}
[M_j] = \alpha_j[M_j']+\beta_j[L_j'],
\qquad
[L_j] = \alpha_j'[M_j']+\beta_j'[L_j'].
\end{align*}
The integers in the tuple $(g;\alpha_1,\beta_1,...,\alpha_r,\beta_r)$ are the so-called Seifert invariants of the Seifert fibration $\pi:Y\to\Sigma_g$, and every $(\alpha_j,\beta_j)$ is called a Seifert pair. 
We stress that the concept of Seifert fibration is more general than the one presented here (for instance it allows for non-orientable total spaces and non-orientable base surfaces), but will not be needed in its full generality for the application to Besse contact forms. In this paper, all Seifert fibrations are implicitly assumed to be of the above type, and in particular with total space and base surface both closed and orientable.

A Seifert fibration can be described by different Seifert invariants tuples, but nevertheless these invariants determine the Seifert fibration completely. More precisely, given two Seifert fibrations $\pi_i:Y_i\to\Sigma_{g_i}$, $i=1,2$, there exist an orientation preserving diffeomorphism $F:Y_1\to Y_2$ and a diffeomorphism $f:\Sigma_{g_1}\to\Sigma_{g_2}$ such that $\pi_2\circ F=f\circ\pi_1$ if and only if the two Seifert fibrations can be described by the same Seifert invariants tuple. 
A theorem due to Raymond \cite{Raymond:1968sf}
(see also \cite[Theorem~2.1]{Jankins:1983zm}) implies that the isomorphism classes of Seifert fibrations are the same as the isomorphism classes of effective $S^1$-actions on $3$-manifolds. This readily implies the following statement in our setting.

\begin{thm}
\label{t:Raymond}
For $i=1,2$, let $(Y_i,\lambda_i)$ be a Besse closed connected contact 3-manifold oriented via the volume form $\lambda_i\wedge\diff\lambda_i$ and whose Reeb orbits have minimal common period $\tau_i$. Then, there exists an orientation preserving diffeomorphism $\psi:Y_1\to Y_2$ such that $\psi\circ\phi_{\lambda_1}^{\tau_1 t}\circ\psi^{-1}=\phi_{\lambda_2}^{\tau_2 t}$ for all $t\in\R$ if and only if $(Y_1,\lambda_1)$ and $(Y_2,\lambda_2)$ have the same Seifert invariants in normal form (up to permutation of the pairs).
\hfill\qed
\end{thm}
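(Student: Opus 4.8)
The plan is to reduce Theorem~\ref{t:Raymond} to the classification of Seifert fibrations by their Seifert invariants, using that the suitably rescaled Reeb flow of a Besse contact $3$-manifold is nothing but an effective $S^1$-action. First I would record, as a consequence of Wadsley's Theorem \cite{Wadsley:1975sp}, that the set of $s\in\R$ with $\phi_{\lambda_i}^s=\id$ is a closed proper subgroup of $\R$ containing $\tau_i\Z$, hence equals $\tau_i\Z$; therefore the assignment $t+\Z\mapsto\phi_{\lambda_i}^{\tau_i t}$ defines a well-defined \emph{effective} $S^1=\R/\Z$-action $\rho_i$ on $Y_i$, which is locally free because $R_{\lambda_i}$ is nowhere vanishing. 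By the discussion preceding the statement, the orbit map of $\rho_i$ is precisely the Seifert fibration $\pi_i:Y_i\to\Sigma_{g_i}$, and its Seifert invariants in normal form are determined by three pieces of oriented data: the orientation of $Y_i$ (from $\lambda_i\wedge\diff\lambda_i$), the orientation of $\Sigma_{g_i}$ (from the $2$-form pushing forward $\diff\lambda_i$ over the regular part), and the orientation of the fibers (from $R_{\lambda_i}$, equivalently from $\rho_i$).

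Next I would isolate the elementary observation that a diffeomorphism $\psi:Y_1\to Y_2$ satisfies $\psi\circ\phi_{\lambda_1}^{\tau_1 t}\circ\psi^{-1}=\phi_{\lambda_2}^{\tau_2 t}$ for all $t\in\R$ if and only if $\psi$ is equivariant for the $S^1$-actions $\rho_1$ and $\rho_2$. For the forward implication I would take such an equivariant $\psi$, push it down to a diffeomorphism $\bar\psi:\Sigma_{g_1}\to\Sigma_{g_2}$ with $\pi_2\circ\psi=\bar\psi\circ\pi_1$, observe that $\psi$ conjugates $R_{\lambda_1}$ to a positive multiple of $R_{\lambda_2}$ and hence preserves the fiber orientations, and conclude — since over the regular part the orientation of $Y_i$ is the product of the base and fiber orientations — that $\bar\psi$ is orientation preserving. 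Thus $(\psi,\bar\psi)$ is an orientation-preserving isomorphism of Seifert fibrations, so the two fibrations are described by a common Seifert invariants tuple, and therefore by a common one in normal form up to permutation of the pairs.

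For the converse I would run the same dictionary backwards: equal normal-form Seifert invariants yield, by the classification of Seifert fibrations recalled above, an orientation-preserving diffeomorphism $F:Y_1\to Y_2$ covering a diffeomorphism $f:\Sigma_{g_1}\to\Sigma_{g_2}$ and preserving fiber orientations; by Raymond's Theorem \cite{Raymond:1968sf}, such an isomorphism of Seifert fibrations can be realized by an $S^1$-equivariant diffeomorphism of $\rho_1$ and $\rho_2$, which by the previous paragraph is exactly an orientation-preserving $\psi$ with $\psi\circ\phi_{\lambda_1}^{\tau_1 t}\circ\psi^{-1}=\phi_{\lambda_2}^{\tau_2 t}$. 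I expect the only delicate point to be the orientation bookkeeping: one must check that an orientation-preserving flow-conjugacy preserves simultaneously all three pieces of oriented data entering the Seifert invariants — in particular that matching normal-form invariants produces a conjugacy carrying the flow forward rather than reversing it — and that, in our setting, ``isomorphism of Seifert fibrations'' and ``equivariant diffeomorphism of the associated $S^1$-action'' really are interchangeable notions, which is precisely what Raymond's Theorem supplies. Everything else is routine once the chain of equivalences ``Reeb-flow conjugacy $\leftrightarrow$ equivariant $S^1$-diffeomorphism $\leftrightarrow$ oriented isomorphism of Seifert fibrations $\leftrightarrow$ equal normal-form Seifert invariants'' is in place.
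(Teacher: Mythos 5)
Your proposal is correct and follows essentially the same route as the paper, which deduces Theorem~\ref{t:Raymond} directly from Raymond's equivariant classification of effective $S^1$-actions/Seifert fibrations by their Seifert invariants, with the Reeb flow rescaled by the minimal common period providing the effective $S^1$-action; the paper leaves this dictionary implicit (``this readily implies the following statement''), and you have simply spelled out the same chain of equivalences, including the orientation bookkeeping.
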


A particular case of a result due to Lisca-Mati\'c \cite{Lisca:2004oz} provides a constraint on the Seifert invariants of a Seifert fibration associated to a Besse contact form.
\begin{thm}[Prop.~3.1 in \cite{Lisca:2004oz}]
\label{t:lisca_matic}
The Seifert invariants $(g;\alpha_1,\beta_1,...,\alpha_r,\beta_r)$ of any Besse closed connected contact 3-manifold satisfy
$\tfrac{\beta_1}{\alpha_1}+...+\tfrac{\beta_r}{\alpha_r}>0$.
\hfill\qed
\end{thm}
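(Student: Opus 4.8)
The plan is to exploit that the Reeb vector field $R_\lambda$ is tangent to the Seifert fibers, so that the contact structure $\xi=\ker\lambda$ is everywhere transverse to the fibers and $\diff\lambda$ descends to a genuine area form on the base. Indeed $\iota_{R_\lambda}\diff\lambda=0$ and $\diff\lambda$ is $\phi_\lambda^t$-invariant, so on $Y\setminus K$ we have $\diff\lambda=\pi^*\omega$ for the $2$-form $\omega$ on $\Sigma_g\setminus\{x_1,\dots,x_r\}$ fixed above; since $\diff\pi$ restricts to an isomorphism $\xi_z\to\Tan_{\pi(z)}\Sigma_g$ and $\diff\lambda|_{\xi_z}$ is non-degenerate, $\omega$ is an area form inducing the chosen orientation of $\Sigma_g$, hence $\int_{\Sigma_g}\omega>0$ as soon as this integral is finite. (Equivalently, integrating the fiberwise period $\int_{\pi^{-1}(x)}\lambda=\tau$ over the base gives $\tfrac1\tau\int_Y\lambda\wedge\diff\lambda=\int_{\Sigma_g}\omega$, manifestly finite and positive.) It therefore suffices to establish the identity
\[ \int_{\Sigma_g}\omega \;=\; \tau\sum_{j=1}^r\frac{\beta_j}{\alpha_j}. \]

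For this I would write $\Sigma_g=\Sigma'\cup\bigcup_j D_j$ with $\Sigma':=\Sigma_g\setminus\bigcup_j\mathrm{int}(D_j)$ and compute each term by Stokes' theorem. Over $\Sigma'$ the trivialization $\psi$ provides a section $s(z):=\psi(z,1)$, so $s^*\pi^*\omega=\omega$ and, using $\partial\Sigma'=-\bigsqcup_j\partial D_j$,
\[ \int_{\Sigma'}\omega=\int_{\Sigma'}\diff(s^*\lambda)=\int_{\partial\Sigma'}s^*\lambda=-\sum_{j=1}^r\int_{M_j'}\lambda. \]
Over $D_j$, the restriction of $\pi$ to the disk $S_j:=\psi_j(D_j\times\{1\})$ is an $\alpha_j$-fold branched covering of $D_j$ branched over $x_j$ (in polar coordinates it is $(\rho,\theta)\mapsto(\rho,\alpha_j\theta)$), so $\int_{S_j}\pi^*\omega=\alpha_j\int_{D_j}\omega$; since $\partial S_j=M_j$ and $\pi^*\omega=\diff\lambda$ on $S_j$, this gives $\int_{D_j}\omega=\tfrac1{\alpha_j}\int_{M_j}\lambda$. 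Finally, $\pi^*\omega$ restricts to zero on the torus $T_j=\pi^{-1}(\partial D_j)$, so $\lambda|_{T_j}$ is closed and $\int_c\lambda$ depends only on $[c]\in\Hom_1(T_j;\Z)$; feeding in $[M_j]=\alpha_j[M_j']+\beta_j[L_j']$ and $\int_{L_j'}\lambda=\tau$ (as $L_j'$ is a regular fiber oriented by $R_\lambda$) yields $\int_{M_j}\lambda=\alpha_j\int_{M_j'}\lambda+\beta_j\tau$. Summing up,
\[ \int_{\Sigma_g}\omega=\sum_{j=1}^r\frac1{\alpha_j}\int_{M_j}\lambda-\sum_{j=1}^r\int_{M_j'}\lambda=\tau\sum_{j=1}^r\frac{\beta_j}{\alpha_j}, \]
and combined with the first step this is exactly $\sum_j\beta_j/\alpha_j>0$.

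The fussiest point, and the one I would check most carefully, is the orientation bookkeeping running through all of these identities: one must verify that the identifications $D_j\cong\{|z|\le1\}\subset\C$ and the trivializations $\psi$, $\psi_j$ are compatible with the orientation of $\Sigma_g$ determined by $\omega$ and with the orientation of the fibers determined by $R_\lambda$, so that the degree of $\pi|_{S_j}$, the sign in $\partial\Sigma'=-\bigsqcup_j\partial D_j$, and the evaluation $\int_{L_j'}\lambda=+\tau$ all appear with the signs written above; a single mismatch would reverse the final inequality. Getting this right is precisely the content of Lisca--Mati\'c's Proposition~3.1, whose Seifert conventions we have arranged to match.
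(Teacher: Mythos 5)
Your argument is correct, but it is worth noting that the paper does not prove this statement at all: Theorem~\ref{t:lisca_matic} is quoted directly from Lisca--Mati\'c \cite{Lisca:2004oz}, so your proposal supplies a self-contained proof where the paper only gives a citation. What you prove is in fact the sharper identity $\int_{\Sigma_g}\omega=\tau\big(\tfrac{\beta_1}{\alpha_1}+\dots+\tfrac{\beta_r}{\alpha_r}\big)$, i.e.\ that the (negative of the) real Euler number of the Seifert fibration is computed by $\tfrac{1}{\tau^2}\int_Y\lambda\wedge\diff\lambda$; positivity then follows because $\omega$ is by construction an area form inducing the chosen orientation of $\Sigma_g$. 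The three steps -- Stokes over $\Sigma'$ using the section $s=\psi(\cdot,1)$, the degree-$\alpha_j$ branched cover $\pi|_{S_j}:S_j\to D_j$ giving $\int_{D_j}\omega=\tfrac{1}{\alpha_j}\int_{M_j}\lambda$, and the homological substitution $\int_{M_j}\lambda=\alpha_j\int_{M_j'}\lambda+\beta_j\tau$ on $T_j$ (valid since $\diff\lambda|_{T_j}=0$ and $\int_{L_j'}\lambda=\tau$) -- all check out against the conventions of Section~\ref{s:Seifert}: $\pi^*\omega=\diff\lambda$ on $Y\setminus K$, fibers oriented by $R_\lambda$, and $[M_j]=\alpha_j[M_j']+\beta_j[L_j']$. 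The only genuine loose end is the one you flag yourself: the signs hinge on the identifications $D_j\cong\{|z|\le 1\}\subset\C$ and the trivializations $\psi,\psi_j$ being orientation-compatible with $\omega$ and $R_\lambda$, because these conventions are what pin down the sign of each $\beta_j$ in the first place; your closing sentence deferring this to Lisca--Mati\'c reads slightly circular (their Proposition~3.1 is the statement being proved, not a convention lemma), and it would be cleaner to simply fix the conventions as in Section~\ref{s:Seifert} -- $D_j$ oriented by $\omega$, $\partial D_j$ with its boundary orientation, $M_j',L_j'$ oriented accordingly -- and observe that with these choices every sign in your computation is forced. With that bookkeeping made explicit, your fiber-integration/Stokes argument is a complete and rather economical proof of the inequality, and has the added benefit of identifying exactly what the positive quantity $\sum_j\beta_j/\alpha_j$ measures.
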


The Seifert fibrations are classified. In particular, a result due to Orlik-Vogt-Zieschang \cite{Orlik:1967ad} (see also \cite[Section~1]{Geiges:2018zt}) implies that a given closed connected orientable 3-manifold $Y$ admits at most one Seifert fibration structure (up to Seifert fibration isomorphism possibly reversing the orientation of the total space), unless $Y$ is a prism manifold, a single Euclidean manifold, or a lens space. Every manifold that is of prism or single Euclidean type admits two non-isomorphic Seifert fibration structures, one of which projects onto a non-orientable surface. By applying this together with Lisca-Mati\'c's Theorem~\ref{t:lisca_matic}, we obtain the following uniqueness result for Besse contact forms.

\begin{lem}
\label{l:classification_Seifert}
Let $Y$ be a closed connected 3-manifold not homeomorphic to a lens space, and $\lambda_1,\lambda_2$ two Besse contact forms on $Y$ whose Reeb orbits have minimal common periods $\tau_1,\tau_2$ respectively. Then, there exists a diffeomorphism $\psi:Y\to Y$ such that $\psi\circ\phi_{\lambda_1}^{\tau_1 t}\circ\psi^{-1}=\phi_{\lambda_2}^{\tau_2 t}$ for all $t\in\R$, and the volume forms $\psi^*(\lambda_2\wedge\diff\lambda_2)$ and $\lambda_1\wedge\diff\lambda_1$ induce the same orientation on $Y$.
\end{lem}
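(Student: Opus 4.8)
The idea is to combine Theorem~\ref{t:Raymond} (which identifies Besse Reeb flows up to orientation-preserving diffeomorphism with their normal-form Seifert invariants) with the classification of Seifert fibrations of Orlik-Vogt-Zieschang, and then to remove the orientation ambiguity using the Lisca-Mati\'c positivity constraint of Theorem~\ref{t:lisca_matic}. First I would observe that the two Besse contact forms $\lambda_1,\lambda_2$ endow $Y$ with two Seifert fibration structures $\pi_i:Y\to\Sigma_{g_i}$, arising from the respective locally free $\R/\tau_i\Z$-actions. Since $Y$ is not a lens space by hypothesis, the Orlik-Vogt-Zieschang result tells us that these two Seifert fibrations are isomorphic up to an orientation of the total space: either there is an orientation-preserving isomorphism between them, or $Y$ is of prism or single-Euclidean type and one of the fibrations projects to a non-orientable surface. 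But our Seifert fibrations both project to \emph{orientable} surfaces (the quotients $\Sigma_{g_i}$ were constructed as orientable), so in the prism/single-Euclidean case the two fibrations must still be isomorphic to each other, though possibly only after reversing the orientation of $Y$. In all cases, then, there is a diffeomorphism $\psi_0:Y\to Y$ (possibly orientation-reversing) intertwining the two Seifert fibrations, hence conjugating $\phi_{\lambda_1}^{\tau_1 t}$ to $\phi_{\lambda_2}^{\tau_2 t}$ by Theorem~\ref{t:Raymond} applied to the appropriately oriented copies of $Y$.

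It remains to upgrade $\psi_0$ to an orientation-compatible diffeomorphism, i.e.\ one for which $\psi_0^*(\lambda_2\wedge\diff\lambda_2)$ and $\lambda_1\wedge\diff\lambda_1$ induce the same orientation. If $\psi_0$ already does this we are done, so suppose not: then $\psi_0$ reverses orientation relative to the volume forms, which forces $Y$ to be of prism or single-Euclidean type and the two Seifert fibrations of $(Y,\lambda_1)$ and $(Y,\lambda_2)$ to have Seifert invariants related by the sign change $(g;\alpha_1,\beta_1,\dots,\alpha_r,\beta_r)\mapsto(g;\alpha_1,-\beta_1,\dots,\alpha_r,-\beta_r)$ (orientation reversal of the total space negates the $\beta_j$ while fixing the $\alpha_j$, up to the standard normal-form moves). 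But Theorem~\ref{t:lisca_matic} requires both $\tfrac{\beta_1}{\alpha_1}+\dots+\tfrac{\beta_r}{\alpha_r}>0$ and $-\big(\tfrac{\beta_1}{\alpha_1}+\dots+\tfrac{\beta_r}{\alpha_r}\big)>0$, which is impossible. Hence $\psi_0$ must in fact be orientation-compatible, and we may take $\psi:=\psi_0$.

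The main obstacle is bookkeeping the precise effect of an orientation reversal of the total space on the normal-form Seifert invariants, and making sure that the ``orientable base surface'' hypothesis genuinely rules out the alternate Seifert structure on prism and single-Euclidean manifolds in the way claimed; this is where I would need to be careful with the conventions of Orlik-Vogt-Zieschang \cite{Orlik:1967ad} and the normalization used in Theorem~\ref{t:Raymond}. Once the sign-change behavior of $\sum\beta_j/\alpha_j$ under orientation reversal is pinned down, the contradiction with Lisca-Mati\'c is immediate and the argument closes. Note that the lens space case is genuinely excluded here: lens spaces carry infinitely many non-isomorphic Seifert fibrations, so the uniqueness input fails and a separate analysis (carried out later for Theorem~\ref{t:classification}) would be required.
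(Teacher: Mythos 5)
Your argument is correct and follows essentially the same route as the paper's proof: uniqueness of the Seifert fibration structure via Orlik--Vogt--Zieschang (using that both base surfaces are orientable and $Y$ is not a lens space) gives a fibration-preserving self-diffeomorphism, the orientation-reversing possibility is excluded by applying Theorem~\ref{t:lisca_matic} to both contact forms since reversing the orientation of the total space negates the $\beta_j$, and Theorem~\ref{t:Raymond} then produces the conjugating, orientation-compatible diffeomorphism. One minor slip: an orientation-reversing isomorphism of the two fibrations would not force $Y$ to be of prism or single-Euclidean type, but your contradiction never uses that claim --- only the sign change of $\sum_j\beta_j/\alpha_j$ --- so the argument is unaffected.
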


\begin{proof}
Let $\pi_i:Y\to\Sigma_{g_i}$ be the Seifert fibration defined by the Besse contact form $\lambda_i$. Since $\Sigma_{g_i}$ is orientable and the total space $Y$ is not homeomorphic to a lens space, the above mentioned result of Orlik-Vogt-Zieschang \cite{Orlik:1967ad} implies that there exist diffeomorphisms $F:Y\to Y$ and $f:\Sigma_{g_1}\to\Sigma_{g_2}$ such that $\pi_2\circ F=f\circ\pi_1$. The lemma now follows from Theorem~\ref{t:Raymond} once we prove that $\lambda_1\wedge\diff
\lambda_1$ and $F^*(\lambda_2\wedge\diff
\lambda_2)$ define the same orientation on $Y$.

Let us assume by contradiction that $\lambda_1\wedge\diff
\lambda_1$ and $F^*(\lambda_2\wedge\diff
\lambda_2)$ define opposite orientations on $Y$. If $(g_1;\alpha_1,\beta_1,...,\alpha_r,\beta_r)$ are  Seifert invariants for $\pi_1:Y\to\Sigma_{g_1}$,  Lisca-Mati\'c's Theorem~\ref{t:lisca_matic} implies that
\begin{align}
\label{e:positive_euler}
\tfrac{\beta_1}{\alpha_1}+...+\tfrac{\beta_r}{\alpha_r}>0.
\end{align}
Since $\lambda_1\wedge\diff
\lambda_1$ and $F^*(\lambda_2\wedge\diff
\lambda_2)$ define opposite orientations, the Seifert fibration $\pi_2:Y\to \Sigma_{g_2}$ has Seifert invariants $(g_1;\alpha_1,-\beta_1,...,\alpha_r,-\beta_r)$, and Lisca-Mati\'c's Theorem~\ref{t:lisca_matic} would imply \[-\tfrac{\beta_1}{\alpha_1}-...-\tfrac{\beta_r}{\alpha_r}>0,\] contradicting~\eqref{e:positive_euler}.
\end{proof}

The classification of Seifert fibrations on lens spaces has been recently carried out by Geiges-Lange \cite{Geiges:2018zt}. We summarize their results that we will need as follows. We recall that, for $p$ and $q$ coprime integers and $p>0$, the lens space $L(p,q)$ is the quotient of the unit 3-sphere $S^3\subset\C^2$ under the $\Z/p\Z$-action generated by $(z_1,z_2)\mapsto(e^{i2\pi/p}z_1,e^{i2\pi q/p}z_2)$. When $p$ is not positive, the lens spaces are defined by $L(p,q):=L(-p,-q)$ and $L(0,1):=S^2\times S^1$. If $\pi:L(p,q)\to\Sigma_g$ is a Seifert fibration, then the base surface $\Sigma_g$ is either $S^2$ or $\R P^2$. Since $\Sigma_g$ is orientable whenever the Seifert fibration is defined by a Besse contact form, in this section we will only consider Seifert fibrations of lens spaces over $S^2$.

\begin{thm}[Prop.~4.6--4.8 and Th.~4.10 in \cite{Geiges:2018zt}]
\label{t:geiges_lange}
$ $
\begin{itemize}[topsep=3pt]
\item[$(\mathrm{i})$] Any Seifert fibration $\pi:L(0,1)\to S^2$ has Seifert invariants $(0;\alpha,\beta,\alpha,-\beta)$, where $\alpha$ and $\beta$ are coprime integers such that $\alpha>0$ and $\beta\geq0$.

\item[$(\mathrm{ii})$] If $p>0$, any Seifert fibration $\pi:L(p,q)\to S^2$ with at most one singular fiber has Seifert invariants $(0;\alpha,\beta)$, where $\beta=p$, $\alpha\neq0$, and $\alpha\equiv q$ or $\alpha q\equiv1$ mod $p$. 

\item[$(\mathrm{iii})$] There exist functions $b_1:\Z^4\to\Z$ and $b_2:\Z^4\to\Z$ such that any Seifert fibration $\pi:L(p,q)\to S^2$ with $p>0$ has Seifert invariants $(0;\alpha_1,\beta_1,\alpha_2,\beta_2)$ satisfying $\beta_1=b_1(p,q,\alpha_1,\alpha_2)$, $\beta_2=b_2(p,q,\alpha_1,\alpha_2)$, and the greatest common divisor $\gcd(\alpha_1,\alpha_2)$ divides $p$.
\hfill\qed
\end{itemize} 
\end{thm}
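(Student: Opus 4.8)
All three assertions are quoted from Geiges--Lange's classification of Seifert fibrations of lens spaces, so the plan is to reconstruct the skeleton of their argument. The common tool is the standard surgery description of a Seifert fibration $M=M(0;\alpha_1,\beta_1,\dots,\alpha_r,\beta_r)$ over $S^2$: one removes $r$ fibered solid tori from the trivial circle bundle $S^2\times S^1$ and glues them back by integer matrices $\left(\begin{smallmatrix}\alpha_j&\beta_j^*\\\beta_j&\alpha_j^*\end{smallmatrix}\right)$ with $\alpha_j\alpha_j^*-\beta_j\beta_j^*=\pm1$. From this one reads off a presentation of $\pi_1(M)$, hence of $H_1(M;\Z)$ as the cokernel of an explicit $(r+1)\times(r+1)$ integer matrix whose determinant, up to sign, equals $\alpha_1\cdots\alpha_r\cdot\big(\tfrac{\beta_1}{\alpha_1}+\dots+\tfrac{\beta_r}{\alpha_r}\big)$; this number is $|H_1(M)|$ whenever the latter is finite, and vanishes exactly when $H_1(M)$ is infinite.

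The first step is to show that a lens space Seifert fibered over $S^2$ has at most two exceptional fibers. If $r\geq3$ (all $\alpha_j\geq2$), then $\pi_1(M)$ surjects onto the orbifold fundamental group of the base $S^2(\alpha_1,\dots,\alpha_r)$, which for $r\geq3$ is never cyclic (it is a non-abelian triangle or polyhedral group when the orbifold is spherical, and an infinite non-cyclic group otherwise). This contradicts the fact that $\pi_1$ of a lens space is always cyclic, and it covers the cases $p>0$ and $p=0$ at once. Hence $r\leq2$; after padding with trivial pairs $(1,0)$ we may always write the invariants as $(0;\alpha_1,\beta_1,\alpha_2,\beta_2)$, which is the format of (i) and (iii), while (ii) is the at-most-one-exceptional-fiber case.

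It remains to identify which lens space a given $M(0;\alpha_1,\beta_1,\alpha_2,\beta_2)$ is and to read off a normal form. Collapsing the fiber over a regular point exhibits a genus-one Heegaard splitting of $M$ whose solid-torus cores are the two exceptional fibers; tracking the gluing matrices yields $p=|\alpha_1\beta_2+\alpha_2\beta_1|$ together with a congruence modulo $p$ that determines $q$ from $(\alpha_1,\beta_1,\alpha_2,\beta_2)$. For (i), $H_1(L(0,1))=\Z$ forces the Euler number $\tfrac{\beta_1}{\alpha_1}+\tfrac{\beta_2}{\alpha_2}$ to vanish; combining this with the coprimality of each Seifert pair and the normal-form conventions (fixing the sign of the $\alpha_j$, absorbing integral units of $\beta$ into a single pair, and using $\beta\mapsto-\beta$ under reversal of orientation of the total space) pins down $\alpha_1=\alpha_2=:\alpha>0$ and $\beta_1=-\beta_2=:\beta\geq0$. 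For (ii), with a single exceptional fiber the gluing is governed by one matrix, and one reads off directly $\beta=p$ and $\alpha\equiv q$ or $\alpha q\equiv1\pmod p$, matching the classical fact $L(p,q)\cong L(p,q')\Leftrightarrow q'\equiv q^{\pm1}\pmod p$. For (iii), once $p>0$, $q$, $\alpha_1$ and $\alpha_2$ are prescribed and a normal form is imposed, the congruence for $q$ together with $p=|\alpha_1\beta_2+\alpha_2\beta_1|$ has a unique solution $(\beta_1,\beta_2)$ in the chosen range, which \emph{defines} the functions $b_1$ and $b_2$; and since $\gcd(\alpha_1,\alpha_2)$ divides $\alpha_1\beta_2+\alpha_2\beta_1=\pm p$, it divides $p$.

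The main obstacle is bookkeeping rather than any single idea: normalizing Seifert tuples (transferring integral units between exceptional fibers and tracking how orientation reversal of $M$ and of the base act on the data) and then extracting the exact congruences that define $b_1$ and $b_2$ is delicate and splits into cases according to $\gcd(\alpha_1,\alpha_2)$ and the residue of $q$. For these computations I would defer to Geiges--Lange \cite{Geiges:2018zt}, exactly as the statement does.
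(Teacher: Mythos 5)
The paper gives no proof of this theorem: it is cited as a black box from Geiges--Lange (Props.~4.6--4.8 and Thm.~4.10 of \cite{Geiges:2018zt}), so there is no internal argument to compare yours against, and your decision to defer the normalization and congruence bookkeeping (the precise definition of $b_1,b_2$ and the $q$ versus $q^{-1}$ ambiguity) to that reference is exactly what the paper does. Your sketch of the underlying arguments is sound and consistent with how the statement is used: the surjection $\pi_1(M)\to\pi_1^{\mathrm{orb}}(S^2(\alpha_1,\dots,\alpha_r))$ together with cyclicity of $\pi_1$ of a lens space rules out three or more exceptional fibers, vanishing of the Euler number for $L(0,1)$ plus coprimality of each Seifert pair forces the form $(0;\alpha,\beta,\alpha,-\beta)$, and $p=|\alpha_1\beta_2+\alpha_2\beta_1|$ immediately yields $\gcd(\alpha_1,\alpha_2)\mid p$ in (iii). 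One small correction: the spherical orbifold group of $S^2(2,2,2)$ is the Klein four group, which is abelian though still non-cyclic, so your parenthetical ``non-abelian'' should read ``non-cyclic''; the argument itself is unaffected.
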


\subsection{Classification of Besse contact 3-manifolds}

The following is the last ingredient needed for proving Theorem~\ref{t:classification}.

\begin{lem}
\label{l:Moser_trick}
For $i=0,1$, let $(Y_i,\lambda_i)$ be a closed contact 3-manifold equipped with a contact form and oriented by means of the volume form $\lambda_i\wedge\diff\lambda_i$. If there exists an orientation preserving diffeomorphism $\psi_0:Y_0\to Y_1$ such that $\diff\psi_0(z)R_{\lambda_0}(z)=R_{\lambda_1}(\psi(z))$ for all $z\in Y_0$, then $\psi_0$ can be isotoped to a diffeomorphism $\psi_1:Y_0\to Y_1$ such that $\psi_1^*\lambda_1=\lambda_0$.
\end{lem}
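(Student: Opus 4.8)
The plan is to run a Moser-type argument on the linear path of 1-forms interpolating between $\psi_0^*\lambda_1$ and $\lambda_0$. First I would replace $\lambda_1$ by $\psi_0^*\lambda_1$, so that without loss of generality we may assume $Y_0=Y_1=:Y$, the diffeomorphism $\psi_0$ is the identity, and the two contact forms $\mu:=\psi_0^*\lambda_1$ and $\lambda_0$ share the same Reeb vector field $R:=R_{\lambda_0}=R_\mu$ (the hypothesis $\diff\psi_0\,R_{\lambda_0}=R_{\lambda_1}\circ\psi_0$ is exactly this, after pull-back) and induce the same orientation $\lambda_0\wedge\diff\lambda_0$ on $Y$ (since $\psi_0$ is orientation preserving). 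Set $\lambda_t:=(1-t)\lambda_0+t\mu$ for $t\in[0,1]$. The key initial observation is that each $\lambda_t$ is again a contact form with Reeb vector field $R$: indeed $\lambda_t(R)\equiv 1$, and contracting $R$ into $\diff\lambda_t$ gives $\iota_R\diff\lambda_t=(1-t)\iota_R\diff\lambda_0+t\,\iota_R\diff\mu=0$, so $\ker\diff\lambda_t\supseteq\R R$; to see $\lambda_t\wedge\diff\lambda_t$ is nowhere zero, note that on $\xi_0:=\ker\lambda_0$ the 2-form $\diff\lambda_t$ restricts (via the projection along $R$, which is the same projection for all $t$ since $\lambda_t(R)=1$) to $(1-t)\diff\lambda_0|_{\xi_0}+t\,\diff\mu|_{\xi_0}$, a convex combination of two area forms inducing the \emph{same} orientation on the rank-2 bundle $\xi_0$ — here is where the common-orientation hypothesis is used — hence nondegenerate; therefore $\lambda_t\wedge\diff\lambda_t\neq 0$ for all $t\in[0,1]$.

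Next I would seek a time-dependent vector field $X_t$ generating a flow $\Psi_t$ with $\Psi_0=\id$ and $\Psi_t^*\lambda_t\equiv\lambda_0$. Differentiating this relation and using Cartan's formula, $\frac{\diff}{\diff t}(\Psi_t^*\lambda_t)=\Psi_t^*\big(\dot\lambda_t+\mathcal{L}_{X_t}\lambda_t\big)=\Psi_t^*\big(\mu-\lambda_0+\diff(\iota_{X_t}\lambda_t)+\iota_{X_t}\diff\lambda_t\big)$, so it suffices to solve the Moser equation
\begin{align*}
\iota_{X_t}\diff\lambda_t+\diff(\iota_{X_t}\lambda_t)=\lambda_0-\mu.
\end{align*}
I would impose the normalization $\iota_{X_t}\lambda_t\equiv 0$, i.e.\ $X_t\in\ker\lambda_t=\ker\lambda_0=\xi_0$, which kills the exact term and reduces the equation to $\iota_{X_t}\diff\lambda_t=\lambda_0-\mu$. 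Because $R\in\ker\diff\lambda_t$ and $(\lambda_0-\mu)(R)=1-1=0$, the 1-form $\lambda_0-\mu$ annihilates $R$ and restricting to $\xi_0$ the equation becomes $\iota_{X_t}\big(\diff\lambda_t|_{\xi_0}\big)=(\lambda_0-\mu)|_{\xi_0}$; since $\diff\lambda_t|_{\xi_0}$ is a nondegenerate 2-form on the rank-2 bundle $\xi_0$ (shown above), this has a unique smooth solution $X_t\in\Gamma(\xi_0)$, depending smoothly on $t$.

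Finally, since $Y$ is closed the flow $\Psi_t$ of $X_t$ is globally defined for $t\in[0,1]$; by construction $\frac{\diff}{\diff t}(\Psi_t^*\lambda_t)=0$ and $\Psi_0^*\lambda_0=\lambda_0$, so $\Psi_1^*\mu=\lambda_0$. Unwinding the initial reduction, $\psi_1:=\psi_0\circ\Psi_1:Y_0\to Y_1$ satisfies $\psi_1^*\lambda_1=\Psi_1^*\psi_0^*\lambda_1=\Psi_1^*\mu=\lambda_0$, and $\psi_1$ is isotopic to $\psi_0$ through $\psi_0\circ\Psi_t$. I expect the only genuinely delicate point to be the verification that every $\lambda_t$ is a contact form — equivalently that the family $\diff\lambda_t|_{\xi_0}$ stays nondegenerate — which is precisely where the hypothesis that $\lambda_0$ and $\psi_0^*\lambda_1$ induce the same orientation enters; everything else is the standard Moser trick, made global by compactness of $Y$.
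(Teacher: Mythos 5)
Your overall route is exactly the paper's: pull back by $\psi_0$ to reduce to one manifold with $R_{\lambda_0}=R_\mu$ for $\mu:=\psi_0^*\lambda_1$, check that the convex combination $\lambda_t=(1-t)\lambda_0+t\mu$ stays contact using the common Reeb field together with the common orientation, and then run the Moser trick with a vector field satisfying $\lambda_t(X_t)=0$ and $\iota_{X_t}\diff\lambda_t=\lambda_0-\mu$. The contactness verification is correct and uses the hypotheses in the same way as the paper does.

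There is, however, one concrete error in the Moser step: the identification $\ker\lambda_t=\ker\lambda_0=\xi_0$ is false except in the trivial case $\mu=\lambda_0$. Indeed, $\ker\mu=\ker\lambda_0$ would force $\mu=f\lambda_0$ for a nowhere vanishing function $f$, and $\mu(R_{\lambda_0})=\lambda_0(R_{\lambda_0})=1$ then gives $f\equiv1$; having the same Reeb vector field does not mean having the same contact distribution. Consequently, the vector field $X_t\in\Gamma(\xi_0)$ you produce by inverting $\diff\lambda_t|_{\xi_0}$ does \emph{not} satisfy your normalization $\lambda_t(X_t)\equiv0$: one has $\lambda_t(X_t)=t\,\mu(X_t)$, so the exact term $\diff\big(\lambda_t(X_t)\big)$ does not drop out of
\begin{align*}
\tfrac{\diff}{\diff t}\,\Psi_t^*\lambda_t=\Psi_t^*\big(\mu-\lambda_0+\diff(\lambda_t(X_t))+\iota_{X_t}\diff\lambda_t\big),
\end{align*}
and the flow of that $X_t$ need not satisfy $\Psi_1^*\mu=\lambda_0$. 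The repair is immediate and brings you back to the paper's formulation: define $X_t$ by the two conditions $\lambda_t(X_t)\equiv0$ and $\iota_{X_t}\diff\lambda_t=\lambda_0-\mu$, solving on $\ker\lambda_t$ rather than on $\xi_0$. This is well posed because $\ker\diff\lambda_t=\R R_{\lambda_0}$ (as $\lambda_t\wedge\diff\lambda_t\neq0$ and $\iota_{R_{\lambda_0}}\diff\lambda_t=0$), so $\diff\lambda_t$ is nondegenerate on $\ker\lambda_t$, which is transverse to $R_{\lambda_0}$ since $\lambda_t(R_{\lambda_0})=1$; moreover $\lambda_0-\mu$ annihilates $R_{\lambda_0}$, so the equation on $\ker\lambda_t$ plus the trivial evaluation on $R_{\lambda_0}$ gives the identity on all of $\Tan Y$. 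With that single substitution your argument coincides with the proof in the paper.
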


\begin{proof}
By pulling back the contact form $\lambda_1$ by means of $\psi_0$, we can assume without loss of generality that $Y_0=Y_1=:Y$, $\psi_0=\id$, $R_{\lambda_0}=R_{\lambda_1}$, and both volume forms $\lambda_0\wedge\diff\lambda_0$ and $\lambda_1\wedge\diff\lambda_1$ define the same orientation on $Y$. For each $t\in[0,1]$, the convex combination $\lambda_t:=t\lambda_1+(1-t)\lambda_0$ is a contact form. Indeed, consider any oriented basis of a tangent space of $Y$ of the form $R_{\lambda_0}(z),v,w$. Since $R_{\lambda_0}=R_{\lambda_1}$, notice that
\begin{align*}
\lambda_i\wedge\diff\lambda_j(R_{\lambda_0}(z),v,w)
=
\diff\lambda_j(v,w)
=
\lambda_j\wedge\diff\lambda_j(R_{\lambda_j}(z),v,w) >0,
\quad\forall i,j\in\{0,1\}.
\end{align*}
This readily implies that the 3-form
\begin{align*}
\lambda_t\wedge\diff\lambda_t
=
t^2\lambda_1 \wedge\diff\lambda_1 +(1-t)^2 \lambda_0\wedge\diff\lambda_0 + t(1-t) (\lambda_0\wedge\diff\lambda_1 + \lambda_1\wedge\diff\lambda_0)
\end{align*}
is a positive volume form on $Y$, and in particular each $\lambda_t$ is a contact form. We can now complete the proof by applying  a Moser trick as follows. We consider the time-dependent vector field $X_t$ on $Y$ defined by $\lambda_t(X_t)\equiv0$ and $X_t\lrcorner\,\diff\lambda_t=\lambda_0-\lambda_1$. Its flow $\psi_t:Y\to Y$, with $\psi_0=\id$, satisfies
\begin{align*}
\tfrac{\diff}{\diff t} \psi_t^*\lambda_t = \psi_t^*\big(\diff(\lambda_t(X_t)) + X_t\lrcorner\,\diff\lambda_t + \lambda_1 - \lambda_0 \big)=0,
\end{align*}
which gives the desired condition $\psi_1^*\lambda_1=\lambda_0$.
\end{proof}

\begin{proof}[Proof of Theorem~\ref{t:classification}]
Let $\lambda_1,\lambda_2$ be two Besse contact forms on a closed 3-manifold $Y$. If there exists a diffeomorphism $\psi:Y\to Y$ such that $\psi^*\lambda_2=\lambda_1$, clearly $\sigmap(Y,\lambda_1)=\sigmap(Y,\lambda_2)$. 
Conversely, assume that the two Besse closed connected contact manifolds have the same prime action spectrum $\sigmap:=\sigmap(Y,\lambda_1)=\sigmap(Y,\lambda_2)$. 
If one of the two contact forms is Zoll, then $\sigmap$ is a singleton, and the other contact form must be Zoll as well. In this case, \cite[Lemma~2.3]{Benedetti:2018ys} implies that there exists a diffeomorphism $\psi:Y\to Y$ such that $\psi^*\lambda_2=\lambda_1$. Assume now that $\lambda_1$ and $\lambda_2$ are not Zoll. By Wadsley's Theorem \cite{Wadsley:1975sp}, their prime action spectrum must have the form
\begin{align*}
\sigmap=\{\tau,\tau/a_1,...,\tau/a_s\},
\end{align*}
for some integers $s>0$ and $a_i>1$, $i=1,...,s$. Here, $\tau>0$ is the minimal common period of the Reeb orbits of both $(Y,\lambda_1)$ and $(Y,\lambda_2)$. We denote by $\Sigma_i$ the quotient of $Y$ under the locally free $\R/\tau\Z$-action defined by the Reeb flow $\phi_{\lambda_i}^t$. As we already discussed, $\Sigma_1$ and $\Sigma_2$ are orientable closed surfaces, and the quotient projections $\pi_1:Y\to\Sigma_1$ and $\pi_2:Y\to\Sigma_2$ are Seifert fibrations. 

If $Y$ is not homeomorphic to a lens space, since the two Reeb flows have the same minimal common period $\tau$, Lemmas~\ref{l:classification_Seifert} and~\ref{l:Moser_trick} imply that there exists a diffeomorphism $\psi:Y\to Y$ such that $\psi^*\lambda_2=\lambda_1$.

It remains to consider the case in which $Y$ is a lens space. Since $Y$ admits the Besse contact forms $\lambda_1$ and $\lambda_2$, it cannot be the lens space $L(0,1)$; indeed, if $Y=L(0,1)$, Theorem~\ref{t:geiges_lange}(i) would imply that the Seifert fibrations $\pi_i:Y\to\Sigma_i$ have Seifert invariants of the form $(0;\alpha,\beta,\alpha,-\beta)$, contradicting Lisca-Mati\'c's Theorem~\ref{t:lisca_matic}. Therefore, we can assume that $Y=L(p,q)$ for some $p>0$. 

We claim that the two Seifert fibrations $\pi_1:Y\to\Sigma_1$ and $\pi_2:Y\to\Sigma_2$ have the same number of singular fibers (which is at most two according to Theorem~\ref{t:geiges_lange}). Indeed, assume that one of the two fibrations, say $\pi_1:Y\to\Sigma_1$, has two singular fibers. Let $(0;\alpha_1,\beta_1,\alpha_2,\beta_2)$ be its Seifert invariants, and notice that $\alpha_1>1$ and $\alpha_2>1$. If the other Seifert fibration has only one singular fiber, then we must have $\alpha_1=\alpha_2=:\alpha$ and $\sigmap=\{\tau,\tau/\alpha\}$. By Theorem~\ref{t:geiges_lange}(iii), the quotient $n_1:=p/\alpha\in(0,p)$ is a positive integer, and we must have $p>1$ and thus $q\neq0$. This, together with Theorem~\ref{t:geiges_lange}(ii), implies that $\pi_2:Y\to\Sigma_2$ has Seifert invariants $(0;\alpha,p)$, and $\alpha\equiv q$ or $\alpha q\equiv1$ mod $p$. Therefore $(1+n_1n_2)\alpha=q$ or $(q+n_1n_2)\alpha=1$ for some $n_2\in\Z$. None of these equalities is possible: the first one since $\alpha>1$ divides $p$ and the non-zero integers $p,q$ are coprime; the latter once since $\alpha>1$. This gives a contradiction.

The Seifert fibrations $\pi_1:Y\to\Sigma_1$ and $\pi_2:Y\to\Sigma_2$ have the same Seifert invariants. Indeed, if they have only one singular fiber, then $\sigmap=\{\tau,\tau/\alpha\}$ for some integer $\alpha>1$, and Theorem~\ref{t:geiges_lange}(ii) implies that their Seifert invariants are $(0;\alpha,p)$. If they have two singular fibers, then $\sigmap=\{\tau,\tau/\alpha_1,\tau/\alpha_2\}$ for some integers $\alpha_1,\alpha_2>1$, and Theorem~\ref{t:geiges_lange}(ii) implies that their Seifert invariants are $(0;\alpha_1,b_1(p,q,\alpha_1,\alpha_2),\alpha_2,b_2(p,q,\alpha_1,\alpha_2))$. This, together with the fact that both Besse contact forms have the same minimal common period $\tau$ for their Reeb orbits, allows to apply Lemmas~\ref{l:classification_Seifert} and~\ref{l:Moser_trick}, which imply that there exists a diffeomorphism $\psi:Y\to Y$ such that $\psi^*\lambda_2=\lambda_1$.
\end{proof}

\appendix

\section{Genericity of bumpy contact forms}
\label{a:bumpy}

Let $(Y,\xi=\ker(\lambda))$ be a closed contact manifold. We recall that the contact form $\lambda$ is called \textbf{bumpy} when, for each $\tau>0$ and $z\in\fix(\phi_\lambda^\tau)$, 1 is not an eigenvalue of the linearized Poicar\'e map $\diff\phi_\lambda^\tau(z)|_\xi$. We wish to stress, here, that $\tau$ is not necessarily the minimal period of $z$. Namely, a contact form is bumpy when the simple closed orbits of its Reeb flow and all their iterates are transversally non-degenerate.
It is well known that generic contact forms supporting a given contact distribution are bumpy, see \cite[Prop.~6.1]{Hofer:1998qq}. In the proof of Lemma~\ref{l:Besse_spectral} we need a slightly stronger statement asserting that such genericity also holds when the contact form is prescribed on an embedded flow box. 

Let us recall the notion of flow box in our setting. Let $\Sigma\subset Y$ be an embedded compact ball of codimension 1 that is transverse to the Reeb vector field $R_\lambda$ and such that, for some $s>0$, the map 
\begin{align*}
 [0,s]\times\Sigma\to Y,\qquad (t,z)\mapsto\phi_\lambda^t(z)
\end{align*}
is a diffeomorphism onto its image. A flow box is a compact subset of $Y$ that is the image of one such map. 

For each compact subset $K$ of a closed manifold $Y$, we denote by $C^r_K(Y)$ the space of $C^r$ functions $f:Y\to\R$ such that $f|_{K}\equiv0$; in the following, $C^r_K(Y)$ will be endowed with the $C^r$ topology.

\begin{prop}\label{p:bumpy}
Let $(Y,\xi=\ker(\lambda))$ be a closed contact manifold, $K\subsetneq Y$ a flow box for the Reeb flow $\phi_\lambda^t$, and $2\leq r\leq\infty$. Then, there is a $G_\delta$-dense subset $\B\subset C^r_K(Y)$ such that, for each $b\in\B$, the contact form $e^b\lambda$ is bumpy.
\end{prop}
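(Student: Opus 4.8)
The plan is to reduce Proposition~\ref{p:bumpy} to the standard bumpy metric/contact form theorem by a local perturbation argument confined to the complement of the flow box $K$. The key observation is that, since $K$ is a flow box, every closed Reeb orbit of $\phi_\lambda^t$ — and indeed every closed orbit of any $\phi_{e^b\lambda}^t$ with $b\in C^r_K(Y)$, since $e^b\lambda\equiv\lambda$ on $K$ forces $K$ to remain a flow box — must exit $K$. Consequently, for each period $\tau>0$, every point $z\in\fix(\phi_{e^b\lambda}^\tau)$ has a segment of its orbit lying in the open set $Y\setminus K$. This means that perturbing $\lambda$ only on $Y\setminus K$ suffices to break all degeneracies: the usual transversality/Sard--Smale machinery that proves genericity of bumpy contact forms (as in \cite[Prop.~6.1]{Hofer:1998qq}) is local in the sense that it perturbs the contact form near a point of the orbit, and here we always have such a point available in the allowed perturbation region $Y\setminus K$.

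Concretely, first I would stratify the space of closed orbits by minimal period: fix $\epsilon>0$ (a lower bound on minimal periods, which exists by compactness of $Y$ and non-vanishing of $R_\lambda$) and, for integers $m\geq 1$ and reals $0<a<b$, consider closed orbits of $e^b\lambda$ of minimal period in $[\epsilon,b]$ iterated up to $m$ times, with total period $\leq b$. For each such finite-type stratum one shows, via a parametrized transversality argument, that the subset of $b\in C^r_K(Y)$ for which all orbits in that stratum are transversally nondegenerate is open and dense. Openness follows from compactness of the relevant orbit set together with continuous dependence of the linearized return map on $b$. For density, given a contact form $e^b\lambda$ with a degenerate orbit $\gamma$, one uses the fact that $\gamma$ meets $Y\setminus K$ to find a point $z_0=\gamma(t_0)\notin K$, chooses a small Darboux-type neighborhood of $z_0$ disjoint from $K$, and perturbs $b$ supported there; the classical computation (the derivative of the linearized Poincaré return map with respect to such a localized perturbation is surjective onto the relevant space of symplectic matrix germs, cf.\ the Klingenberg--Takens/Anosov argument adapted to the contact setting) shows one can remove the degeneracy. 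Then one intersects over the countably many strata (indexed by $m\in\N$ and $b\in\N$) to obtain the $G_\delta$-dense set $\B$; here one must be slightly careful that a perturbation removing degeneracies up to period $b$ might create degeneracies at larger periods, so the standard device is to work with a sequence of nested open-dense sets $\B_{m,b}$, each being ``all orbits of period $\leq b$ iterated $\leq m$ times are nondegenerate'', and set $\B=\bigcap_{m,b}\B_{m,b}$, which is $G_\delta$-dense by Baire.

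The main obstacle — and the only place where more than a citation is really needed — is verifying that the perturbation producing the nondegeneracy can be taken with support in $Y\setminus K$, i.e.\ that the transversality argument genuinely only needs to perturb the contact form near \emph{one} point of each orbit rather than along the whole orbit. This is where the flow-box hypothesis does its work: one shows that the map sending a localized perturbation near $z_0\notin K$ to the first-order change in the linearized return map based at $z_0$ is surjective, which is a local normal-form computation in Darboux coordinates on a neighborhood of $z_0$ (one may even normalize the contact form to the standard $dz - y\,dx$ there and compute explicitly with compactly supported bump functions in $x,y,z$). Once this surjectivity is in hand, the Sard--Smale theorem applied to the universal moduli space of $(b,z)$ with $z$ a period-$\tau$ fixed point, fibered over $C^r_K(Y)$, gives density of the good parameters, and the rest is bookkeeping. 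I would also remark that $2\leq r\leq\infty$ is needed precisely so that $C^r_K(Y)$ is a Banach (or, for $r=\infty$, a suitable tame Fréchet) manifold on which Sard--Smale applies, with the $r=\infty$ case handled by the standard Taubes-type trick of taking a decreasing sequence of $C^r$-open-dense sets.
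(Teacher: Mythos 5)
Your overall strategy is the one the paper follows (Anosov's scheme adapted to Reeb flows): confine all perturbations to $Y\setminus K$, which is legitimate because $b|_K\equiv 0$ forces $R_{e^b\lambda}=R_\lambda$ on $K$, so $K$ remains a flow box and every closed Reeb orbit of every $e^b\lambda$ must leave it; prove openness of the bounded-period nondegeneracy conditions by compactness; get density by parametrized transversality; finish with Baire and a $C^\infty$-approximation for $r=\infty$. The genuine gap is in the step you call bookkeeping. Sard--Smale applied to the universal space of pairs $(b,z)$ with $z\in\fix(\phi_{e^b\lambda}^\tau)$ yields density only where the universal evaluation map $\Phi(b,z,t)=(z,\phi_{e^b\lambda}^t(z))$ is itself transverse to the diagonal, and this transversality genuinely fails at iterated orbits, no matter how $b$ is varied. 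Concretely, if $z$ lies on a simple orbit of period $\tau$ with $P=\diff\phi_{e^b\lambda}^{\tau}(z)|_{\xi}$, then the $\xi$-component of the first-order displacement of $\phi_{e^b\lambda}^{k\tau}(z)$ produced by any variation of $b$ has the form $(\mathrm{Id}+P+\dots+P^{k-1})w$, where $w$ is the corresponding displacement for the simple orbit; if $P$ is elliptic with eigenvalues primitive $m$-th roots of unity, $m|k$, $m>1$, this operator vanishes on $\xi$, and so does $(P^{k}-\mathrm{Id})\delta z$, so the diagonal cannot be reached. This is exactly the classical pitfall in the bumpy metric theorem (the reason Abraham's transversality sketch required Anosov's repair). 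Your surjectivity claim onto ``symplectic germs'' near $z_0$ does not restore this transversality, since changing only the linearization does not move the endpoint; nor can you argue by removing degeneracies of orbits one at a time, because a non-bumpy form may carry a continuum of closed orbits (Besse forms being precisely the relevant examples in this paper).

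What is needed, and what the paper does, is a two-scale induction rather than a single family of nested sets: transversality of $\Phi$ is proved only at \emph{minimal} periods (Lemma~\ref{l:transversality}, by the soft device of moving the endpoint with a contact Hamiltonian supported in $Y\setminus K$ --- no germ surjectivity is needed there); Sard--Smale is then applied over the restricted parameter set $\B^r(T,2T)$, on which all iterates of total period at most $2T$ of orbits of minimal period at most $T$ are already nondegenerate, so that at non-minimal periods transversality of $\Phi$ holds automatically (Lemma~\ref{l:dense1}); finally, $\B^r(T,2T)$ is shown to be dense in $\B^r(T)$ by perturbing only the finitely many, already nondegenerate, simple orbits of period at most $T$ so that their return maps avoid the relevant roots of unity (Lemma~\ref{l:dense2}) --- this is the only place where a localized return-map perturbation off $K$ is invoked, and for finitely many nondegenerate orbits it is much easier than jet-surjectivity at a degenerate one. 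Your sets $\B_{m,b}$ are indeed open, but proving each of them dense is precisely this induction; as written, your density argument covers only simple orbits.
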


The proof of this proposition is analogous to the one provided by Anosov \cite{Anosov:1982ix} in the case of geodesic flows, and we will carry it over after some preliminaries. 
For each $2\leq r\leq\infty$ and $T_1\geq T_0>0$, we denote by $\B^r(T_0,T_1)$ the subset of those $b\in C^r_K(Y)$ such that 
\begin{align*}
1\not \in\sigma(\diff\phi_{e^b\lambda}^{k\tau}(z)|_\xi),
\qquad
\forall 
\tau\in(0,T_0],\ k\in \N\cap(0,T_1/\tau],\ z\in \fix(\phi_{e^b\lambda}^{\tau}).
\end{align*}
Here, as usual, $\sigma(\cdot)$ denotes the spectrum of a linear endomorphism.

\begin{lem}
The subset $\B^r(T_0,T_1)$ is open in $C^r_K(Y)$. 
\end{lem}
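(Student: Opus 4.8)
The plan is to show that the complement of $\B^r(T_0,T_1)$ is closed, by exhibiting a sequence in the complement converging in $C^r_K(Y)$ to a limit which also lies in the complement. So suppose $b_n\to b$ in $C^r_K(Y)$ with each $b_n\notin\B^r(T_0,T_1)$. For each $n$ there is a period $\tau_n\in(0,T_0]$, an integer $k_n\in\N\cap(0,T_1/\tau_n]$, and a point $z_n\in\fix(\phi_{e^{b_n}\lambda}^{\tau_n})$ such that $1\in\sigma\big(\diff\phi_{e^{b_n}\lambda}^{k_n\tau_n}(z_n)|_\xi\big)$. First I would pass to a subsequence along which $\tau_n\to\tau\in[0,T_0]$, $z_n\to z\in Y$ (using compactness of $Y$ and of $[0,T_0]$), and $k_n$ is eventually constant equal to some $k\in\N$ (here one must note $k_n\leq T_1/\tau_n$, so a uniform bound on $k_n$ requires a uniform lower bound on $\tau_n$; I address this below).

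The key analytic input is continuous dependence of the Reeb flow on the contact form: since $b_n\to b$ in $C^r$ with $r\geq2$, the Reeb vector fields $R_{e^{b_n}\lambda}$ converge to $R_{e^b\lambda}$ in $C^{r-1}$, hence in $C^1$, and so the time-$t$ flow maps $\phi_{e^{b_n}\lambda}^t$ and their linearizations $\diff\phi_{e^{b_n}\lambda}^t$ converge to $\phi_{e^b\lambda}^t$ and $\diff\phi_{e^b\lambda}^t$, uniformly for $t$ in compact subsets of $\R$ and uniformly on $Y$. Combining this with $\tau_n\to\tau$, $z_n\to z$, and $k_n\equiv k$, the closedness condition $\phi_{e^{b_n}\lambda}^{\tau_n}(z_n)=z_n$ passes to the limit to give $\phi_{e^b\lambda}^{\tau}(z)=z$, and the eigenvalue condition passes to the limit by continuity of the spectrum of a matrix under perturbation: $1\in\sigma\big(\diff\phi_{e^b\lambda}^{k\tau}(z)|_\xi\big)$.

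The main obstacle is ruling out the degenerate limit $\tau=0$, and relatedly keeping $k$ finite. The fix is the standard observation that on a compact manifold the Reeb flow of a contact form has a positive lower bound $\rho>0$ on the minimal periods of its closed orbits, and this bound can be taken uniform over a $C^1$-neighborhood of the Reeb vector field (since $R_{e^{b}\lambda}$ is nowhere zero and a short-time flow estimate gives $\phi^t(z)\neq z$ for $0<t<\rho$ uniformly; the same $\rho$ works for all $e^{b_n}\lambda$ with $n$ large). Hence $\tau_n\geq\rho$ for all large $n$, so $\tau\geq\rho>0$ and $k_n\leq T_1/\rho$, which gives the uniform bound on $k_n$ needed to extract the constant subsequence, and also ensures $\tau\in(0,T_0]$ and $k\in\N\cap(0,T_1/\tau]$ in the limit. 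This shows $b\notin\B^r(T_0,T_1)$, so the complement is closed and $\B^r(T_0,T_1)$ is open. \hfill\qed
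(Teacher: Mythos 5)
Your proof is correct and follows essentially the same route as the paper: a sequential argument showing that a $C^r_K$-limit of functions violating the non-degeneracy condition still violates it, using continuous dependence of the Reeb flow and its linearization on the contact form. You additionally justify the subsequence extraction (the uniform positive lower bound on periods ruling out $\tau_n\to0$ and bounding $k_n$), a point the paper's proof passes over with ``up to passing to appropriate subsequences''.
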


\begin{proof}
Assume that a function $b\in C^r_K(Y)$ does not belong to the interior of $\B^r(T_0,T_1)$, so that there exists a sequence $b_n\in C^r_K(Y)\setminus\B^r(T_0,T_1)$ converging to $b$. Therefore, there exist sequences $\tau_n\in(0,T_0]$, $k_n\in\N\cap(0,T_1/\tau_n]$, and $z_n\in\fix(\phi_{e^{b_n}\lambda}^{\tau_n})$ such that $1\in\sigma(\diff\phi_{e^{b_n}\lambda}^{k_n\tau_n}(z_n)|_\xi)$. Up to passing to appropriate subsequences, we can assume that $\tau_n\to\tau\in(0,T_0]$, $k_n\equiv k\in(0,T_1/\tau]$, and $z_n\to z$. However, this implies that $\phi_{e^b\lambda}^{\tau}(z)=z$ and $1\in\sigma(\diff\phi_{e^{b}\lambda}^{k\tau}(z)|_\xi)$, and thus $b\not\in\B^r(T_0,T_1)$.
\end{proof}

We set $\B^r(T):=\B^r(T,T)$. Namely, $\B^r(T)$ is the set of those $b\in C^r_K(Y)$ such that all the (possibly iterated) closed orbits of the Reeb flow $\phi_\lambda^t$ with period at most $T$ are transversely non-degenerate. 
We introduce the $C^{r-1}$ map
\begin{align*}
\Phi:C^r_K(Y)\times Y\times(0,\infty)\to Y\times Y,
\qquad
\Phi(b,z,t)=(z,\phi_{e^b\lambda}^t(z)),
\end{align*}
and we denote by $\Phi_b:=\Phi(b,\cdot,\cdot):Y\times (0,\infty)\to Y\times Y$ its restrictions. Notice that, if $\tau>0$ and $z\in\fix(\phi_{e^b\lambda}^\tau)$, then $1\not \in\sigma(\diff\phi_{e^b\lambda}^{\tau}(z)|_\xi)$ if and only if the image of $\diff\Phi_b(z,\tau)$ is transverse to $\Tan_{(z,z)}\Delta$, where $\Delta\subset Y\times Y$ is the diagonal submanifold. Even when this transversality condition is not satisfied, we still have the following one. From now on, we assume that
\[3\leq r<\infty,\]
so that the map $\Phi$ is at least $C^2$.

\begin{lem}\label{l:transversality}
If $\tau>0$ is the minimal period of a closed Reeb orbit $\phi_{e^b\lambda}^t(z)$, then the image of $\diff\Phi(b,z,\tau)$ is transverse to $\Tan_{(z,z)}\Delta$. In particular, for each $T>0$, the map $\Phi|_{\B^r(T,2T)\times Y\times(0,2T)}$ is transverse to the diagonal $\Delta\subset Y\times Y$.
\end{lem}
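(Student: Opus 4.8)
The plan is to prove the first assertion by a direct computation exploiting the fact that the Reeb flow preserves $\lambda$, and then to deduce the second assertion by combining it with the definition of $\B^r(T,2T)$. First I would fix $b$ and write $\mu := e^b\lambda$, $\phi^t := \phi_\mu^t$, and let $\tau$ be the minimal period of the orbit through $z$. The tangent space $\Tan_{(z,z)}\Delta$ is $\{(v,v)\ |\ v\in\Tan_z Y\}$, which has codimension $3$ in $\Tan_{(z,z)}(Y\times Y)$, so to prove transversality of the image of $\diff\Phi(b,z,\tau)$ to the diagonal it suffices to exhibit three linearly independent vectors in the image whose ``difference components'' span $\Tan_z Y$. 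I would use the three obvious types of variations: (a)~the $t$-variation $\partial_t\Phi(b,z,t)|_{t=\tau} = (0, R_\mu(z))$, since $\phi^\tau(z)=z$; (b)~the $z$-variations $\diff\Phi_b(z,\tau)(v,0) = (v, \diff\phi^\tau(z)v)$ for $v\in\Tan_z Y$; and (c)~a well-chosen $b$-variation. For the $z$-variations, the ``difference'' is $\diff\phi^\tau(z)v - v$; restricted to $\xi_z$ this is $(\diff\phi^\tau(z)|_{\xi_z} - \id)$, which need not be surjective onto $\xi_z$, but along the Reeb direction it already gave us (a). So the only possible failure of transversality is that $(\diff\phi^\tau(z)|_{\xi_z} - \id)$ is not surjective onto $\xi_z$, and that is exactly where the $b$-variation (c) must rescue us.

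The key computation is therefore the effect of an infinitesimal conformal change $b \mapsto b + \epsilon\beta$ on the time-$\tau$ return map. Since $K$ is a flow box disjoint from every closed orbit with period $\le$ any fixed bound once we are in $\B^r$, but more simply because the orbit of $z$ is embedded of length $\tau$, I can choose $\beta \in C^r_K(Y)$ supported in a small tube around the orbit segment $\phi^{[0,\tau]}(z)$ and compute $\tfrac{\diff}{\diff\epsilon}\big|_{\epsilon=0}\diff\phi_{e^{b+\epsilon\beta}\lambda}^\tau(z)$ using the standard first-variation formula for the linearized flow (Duhamel's principle applied to the linearized Hamiltonian/Reeb equations on $\xi$). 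The upshot, which is classical in the geodesic-flow setting (this is precisely Anosov's computation \cite{Anosov:1982ix}), is that by varying $\beta$ freely one can realize any prescribed ``kick'' to the Poincaré return map, so that the combined linearization in $(b,z,t)$ does surject modulo $\Tan_{(z,z)}\Delta$. Concretely, I would show that the map $\beta \mapsto$ (the induced infinitesimal change of $\diff\phi^\tau(z)|_{\xi_z}$) already has image containing enough symplectic directions — in fact, because $\xi_z$ is $2$-dimensional, one short computation shifting the conformal factor near a single transverse disk to the orbit suffices to produce a symplectic shear in any direction, hence to fill in the missing part of $\xi_z$ in the difference component. The main obstacle is getting this variational formula right: one must linearize $\phi_{e^{b}\lambda}^t$ both in time and in $b$, track how the conformal change $e^b\lambda$ alters the Reeb vector field ($R_{e^b\lambda}$ is not simply $e^{-b}R_\lambda$ — there is a correction term in $\xi$), and verify that the resulting family of perturbations of the Poincaré map is rich enough; this is the one genuinely non-formal step, and the embeddedness of the orbit (so that a bump function supported near one transverse slice does not interfere with itself under the flow) is what makes it work.

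For the second assertion, suppose $(b,z,t) \in \B^r(T,2T)\times Y\times(0,2T)$ with $\Phi(b,z,t)\in\Delta$, i.e.\ $\phi_{e^b\lambda}^t(z) = z$ with $0<t<2T$. Let $\tau$ be the minimal period of the orbit through $z$; then $t = k\tau$ for some $k\in\N$, and $\tau \le t < 2T$, so $\tau\in(0,2T)$ and $k\in\N\cap(0,2T/\tau]$. Since $b\in\B^r(T,2T)$ and $\tau \le t/k < 2T$... here I must be slightly careful: the definition of $\B^r(T_0,T_1)$ requires $\tau\in(0,T_0]$, so I only get non-degeneracy of iterates when $\tau\le T$. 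But in fact transversality of $\diff\Phi(b,z,t)$ to $\Delta$ at an iterate follows from the \emph{first} assertion regardless: the vectors (a)~$(0,R_{e^b\lambda}(z))$ and (b)~$(v,\diff\phi^t(z)v)$ together with the $b$-variation (c)~built around the \emph{primitive} orbit still span modulo $\Tan_{(z,z)}\Delta$, because the first-variation formula for $\diff\phi^{k\tau}(z)$ in the direction $\beta$ (with $\beta$ supported near the primitive orbit) again produces an arbitrary symplectic kick — indeed $\diff\phi^{k\tau}(z)|_\xi = (\diff\phi^\tau(z)|_\xi)^k$ and the chain rule spreads the single kick across the $k$ passages, which one checks still fills $\xi_z$. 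Thus the first assertion, applied to the primitive period $\tau$ but with the curve run $k$ times, already yields transversality of $\Phi|_{Y\times(0,2T)}$ to $\Delta$ at every point of $\B^r(T,2T)\times Y\times(0,2T)$, which is what we want; the role of the $\B^r(T,2T)$ hypothesis is only to ensure that nothing pathological (e.g.\ a sequence of orbits with period shrinking to $0$) interferes, and it is recorded here for use in the subsequent Sard-Smale argument.
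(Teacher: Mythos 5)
Your plan for the first assertion computes the wrong derivative, and this is a genuine gap rather than a presentational one. The $b$-component of $\diff\Phi(b,z,\tau)$ is the first variation of the \emph{endpoint}, $\beta\mapsto\tfrac{\diff}{\diff s}\big|_{s=0}\phi_{e^{b+s\beta}\lambda}^{\tau}(z)\in\Tan_zY$, whereas the object you propose to compute, $\tfrac{\diff}{\diff\epsilon}\big|_{\epsilon=0}\diff\phi_{e^{b+\epsilon\beta}\lambda}^{\tau}(z)$, is a variation of the linearized return map. The latter is not a tangent vector of $Y$ and contributes nothing to the image of $\diff\Phi$: a perturbation can shear the Poincar\'e map while leaving the orbit through $z$ fixed, hence give the zero vector in the $b$-direction of $\diff\Phi$. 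So ``realizing any kick to the Poincar\'e return map'' does not fill the cokernel of $\diff\phi^{\tau}(z)|_{\xi}-\id$ in the difference component; what you need (and leave unproved, calling it the ``non-formal step'') is surjectivity of the endpoint variation. The paper proves exactly that, by a soft contact-Hamiltonian argument: $\phi_{e^b\lambda}^t$ is the contact isotopy of $H\equiv1$ for $e^b\lambda$; pick $t_0\in[0,\tau)$ with $z_0:=\phi_{e^b\lambda}^{t_0}(z)\in Y\setminus K$ (possible since $K$ is a flow box), and for any $v\in\Tan_zY$ choose a family $H_s$ with $H_0\equiv1$, $H_s\equiv1$ outside a small neighborhood $U\subset Y\setminus K$ of $z_0$, and $\tfrac{\diff}{\diff s}\big|_{s=0}\psi_{H_s}^{\tau}(z)=v$; since $\psi_{H_s}^t=\phi_{e^{b+b_s}\lambda}^t$ with $b_s=-\log H_s\in C^r_K(Y)$, this puts $(0,v)$ in the image of $\diff\Phi(b,z,\tau)$ for every $v\in\Tan_zY$, and transversality follows at once. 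Minimality of $\tau$ enters there: the orbit crosses $U$ only once on $[0,\tau)$, so the kick is controllable.

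Your treatment of the ``in particular'' clause is also off. The claim that transversality at an iterate follows from the primitive case by ``spreading the kick across the $k$ passages'' is unjustified, and it is precisely the failure mode that makes iterates the delicate case in bumpy-type theorems: the same bump is traversed $k$ times with correlated contributions, and the first assertion says nothing about non-minimal periods. The correct deduction is the dichotomy you began and then abandoned: for $(b,z,t)\in\B^r(T,2T)\times Y\times(0,2T)$ with $\phi_{e^b\lambda}^t(z)=z$ and primitive period $\tau$, either $\tau\leq T$, in which case $t=k\tau$ with $k\leq 2T/\tau$ and the definition of $\B^r(T,2T)$ gives $1\notin\sigma(\diff\phi_{e^b\lambda}^{t}(z)|_{\xi})$, so the $z$- and $t$-variations alone already make $\diff\Phi_b(z,t)$ transverse to $\Delta$; or $\tau>T$, in which case $t<2T<2\tau$ forces $t=\tau$ and the first assertion applies. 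Accordingly, your closing remark that the hypothesis $b\in\B^r(T,2T)$ only rules out ``pathologies'' is mistaken: it is exactly what handles iterated orbits of primitive period at most $T$, which the first assertion does not cover.
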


\begin{proof}
For each $H\in C^\infty(Y)$, we denote by $\psi_H^t:Y\to Y$ the contact isotopy generated by the contact Hamiltonian vector field $X_H$, which is defined by \[e^b\lambda(X_H)=H,\qquad X_H\lrcorner\,\diff(e^b\lambda)=-\diff H+\diff H(R_{e^b\lambda})e^b\lambda.\]
Notice that $\phi_{e^b\lambda}^t=\psi_1^t$. Assume that $\tau>0$ is the minimal period of a closed Reeb orbit $t\mapsto\psi_1^t(z)$. Since $K$ is a flow box for the Reeb flow $\psi_1^t(z)$, this closed orbit must intersect its complement $Y\setminus K$. Let $t_0\in[0,\tau)$ be such that $z_0:=\psi_1^{t_0}(z)\in Y\setminus K$. For each $v\in\Tan_z Y$ and for each open neighborhood $U\subset Y\setminus K$ of $z_0$, we can find a family of smooth functions $H_s:Y\to\R$ smoothly depending on $s\in(-\epsilon,\epsilon)$ such that $H_0\equiv1$, $H_s|_{Y\setminus U}\equiv 1$ for each $s\in(-\epsilon,\epsilon)$, and $\tfrac{\diff}{\diff s}\big|_{s=0}\psi_{H_s}^\tau(z)=v$. We set $b_s:=-\log(H_s)$, and notice that $b_0\equiv0$ and $b_s|_{Y\setminus U}\equiv 0$ for all $s\in(-\epsilon,\epsilon)$. In particular each $b_s$ belongs to $C^r_K(Y)$. The contact Hamiltonian vector field $X_{H_s}$ is the Reeb vector field associated to the contact form $e^{b+b_s}\lambda=\tfrac1{H_s}e^b\lambda$, i.e., $\psi_{H_s}^t=\phi_{e^{b+b_s}\lambda}^t$. Therefore, if we set $b':=\partial_s b_s|_{s=0}$, we have
\begin{align*}
\diff\Phi(b,z,\tau)(b',0,0)=\tfrac{\diff}{\diff s}\big|_{s=0}\Phi(b+b_s,z,\tau)=
\tfrac{\diff}{\diff s}\big|_{s=0} (z,\psi_{H_s}^\tau(z))
=(0,v).
\end{align*}
This readily implies that $\diff\Phi(b,z,\tau)$ is transverse to $\Tan_{(z,z)}\Delta$.
\end{proof}

\begin{lem}\label{l:dense1}
For each $T>0$ and $S\in(T,2T)$, the intersection $\B^r(S)\cap\B^r(T,2T)$ is dense in $\B^r(T,2T)$.
\end{lem}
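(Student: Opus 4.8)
The plan is a standard Sard--Smale transversality argument, carried out with the map $\Phi$ of Lemma~\ref{l:transversality}, but organized so as to only perturb the closed orbits of period in the ``new'' window $(T,S]\subset(T,2T]$. Fix $b_0\in\B^r(T,2T)$ and $\varepsilon>0$; we must produce $b\in\B^r(S)\cap\B^r(T,2T)$ with $\|b-b_0\|_{C^r}<\varepsilon$. Since $\B^r(T,2T)$ is open (by the openness lemma), after shrinking $\varepsilon$ we may assume the whole $\varepsilon$-ball around $b_0$ lies in $\B^r(T,2T)$; thus it suffices to find $b$ in that ball lying in $\B^r(S)$. Now, because $b_0\in\B^r(T,2T)$, none of the closed Reeb orbits of $e^{b_0}\lambda$ of period $\le T$ (equivalently, minimal period $\le T$, iterated up to total period $2T$) is degenerate; the only thing that can fail for $b_0\in\B^r(S)$ is that some simple closed orbit of minimal period in $(T,S]$ — or some iterate of a simple orbit of minimal period in $(S/2,T]$ up to total period $S$ — is transversally degenerate. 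By openness of $\B^r(T,2T)$ and the finiteness of the relevant orbit count (compactness of $Y$, a uniform lower bound on minimal periods), there are only finitely many such potentially bad orbits, and each has minimal period $>S/2\ge T/2>0$ bounded below; moreover since each is a closed orbit of minimal period $\le S<2T$, Lemma~\ref{l:transversality} applies at its minimal period.

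The second step is to restrict the universal map $\Phi$ to the relevant region and apply the parametric transversality theorem. Consider $\Phi$ restricted to $\mathcal{U}\times Y\times (T/2, 2T)$, where $\mathcal{U}\subset C^r_K(Y)$ is the $\varepsilon$-ball around $b_0$. By Lemma~\ref{l:transversality}, at any $(b,z,t)$ with $t$ the \emph{minimal} period of the orbit through $z$, the differential $\diff\Phi(b,z,t)$ is already transverse to the diagonal $\Delta$; hence $\Phi$ is transverse to $\Delta$ along the subset of triples with $t$ minimal. For iterates (i.e.\ $t=k\tau$ with $\tau$ the minimal period, $k\ge 2$), note that a triple $(z,k\tau)$ with $\phi^{k\tau}_{e^b\lambda}(z)=z$ is also a preimage of $\Delta$ under $(b,z,\tau)\mapsto(z,\phi^{k\tau}_{e^b\lambda}(z))$, and transversality of $\Phi$ at the minimal period $\tau$ implies transversality of this iterated evaluation at $(b,z,\tau)$ as well — differentiating in the $b$-direction still produces arbitrary tangent vectors at the second factor, exactly as in the proof of Lemma~\ref{l:transversality}, since one perturbs the contact form near a point of the orbit lying in $Y\setminus K$. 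Thus $\Phi$ (and all its ``$k$-th iterate'' variants with $k\tau\le S$) is transverse to $\Delta$ on the region of interest. By the Sard--Smale theorem (valid since $r\ge 3$ makes $\Phi$ at least $C^2$ and the domain and $\Delta$ have finite codimension), for a residual — hence dense — set of $b\in\mathcal{U}$ the restricted maps $\Phi_b$ and their iterates are transverse to $\Delta$ on $(T/2,S]$. For such $b$, every closed Reeb orbit of $e^b\lambda$ with total period in $(T/2,S]$ is transversally nondegenerate; combined with $b\in\mathcal{U}\subset\B^r(T,2T)$ this gives $b\in\B^r(S)\cap\B^r(T,2T)$, proving density.

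The main obstacle is the bookkeeping around iterated orbits: one has to be careful that transversality of $\Phi$ to $\Delta$ is only available at the \emph{minimal} period (Lemma~\ref{l:transversality} is stated that way for good reason — at an iterated period the diagonal-transversality can genuinely fail, that's precisely the degeneracy we are removing), so the Sard--Smale argument must be applied not to $\Phi$ itself but, for each integer $k$ with $1\le k\le \lfloor 2T/(T/2)\rfloor$, to the map $(b,z,\tau)\mapsto (z,\phi^{k\tau}_{e^b\lambda}(z))$ on the locus where $\tau$ is minimal, and one then intersects the finitely many resulting residual sets. The perturbation is supported in $Y\setminus K$ exactly as in Lemma~\ref{l:transversality}, so the constraint $b|_K\equiv 0$ is preserved throughout; this is what makes the statement compatible with the flow-box hypothesis needed later in Proposition~\ref{p:bumpy}. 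I would also remark that the upper bound $2T$ (rather than just $S$) in $\B^r(T,2T)$ is what guarantees every orbit of minimal period $\le S<2T$ can be treated at its minimal period, so no separate argument is needed for orbits whose minimal period lies in $(T,S]$ versus $(S/2,T]$.
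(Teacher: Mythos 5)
Your overall architecture (restrict the universal map $\Phi$ to a ball inside $\B^r(T,2T)$, times $Y\times(T/2,2T)$, apply parametric transversality, and read off nondegeneracy of all orbits of total period up to $S$) is the same as the paper's, but the step where you justify transversality at \emph{iterated} orbits is genuinely wrong as stated. You claim that for the iterated evaluation map $(b,z,\tau)\mapsto(z,\phi^{k\tau}_{e^b\lambda}(z))$, differentiating in the $b$-direction ``still produces arbitrary tangent vectors at the second factor, exactly as in the proof of Lemma~\ref{l:transversality}''. It does not: since $b$ is a function on $Y$ and the orbit traverses the same embedded circle $k$ times, a perturbation of $b$ supported near a point of the orbit contributes the \emph{same} first-order displacement at each pass, and these contributions are propagated by powers of the linearized return map $P=\diff\phi^{\tau}_{e^b\lambda}(z)$. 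The $b$-derivative of the endpoint therefore lies in the image of $\id+P+\cdots+P^{k-1}$, which on $\xi$ is zero whenever $P|_\xi$ has a nontrivial $k$-th root of unity as eigenvalue (e.g.\ an elliptic orbit with rotation angle $2\pi/k$); in exactly that situation $\diff\phi^{k\tau}_{e^b\lambda}(z)|_\xi$ also has eigenvalue $1$, so neither $\Phi_b$ nor the parameter direction restores transversality. This is precisely the classical pitfall in bumpy-type theorems that the two-window sets $\B^r(T_0,T_1)$ are designed to circumvent; if your claim were true, one could prove Proposition~\ref{p:bumpy} in a single stroke without them. (Your ``finitely many potentially bad orbits'' remark is also unjustified, since degenerate orbits of minimal period in $(T,S]$ may come in continuous families, but it is not load-bearing.)

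The gap is fixable, and the fix is exactly what the paper does: on your parameter domain the iterated points need no parameter direction at all. If $(b,z,t)$ lies in $\Phi^{-1}(\Delta)$ with $b\in\B^r(T,2T)$, $t<2T$, and $t=k\tau$ for $k\geq2$ with $\tau$ the minimal period, then $\tau<T$, so the definition of $\B^r(T,2T)$ gives $1\notin\sigma(\diff\phi^{k\tau}_{e^b\lambda}(z)|_\xi)$, and hence $\Phi_b$ alone is already transverse to $\Delta$ at $(z,t)$; the remaining case $k=1$ is the first part of Lemma~\ref{l:transversality}. This is the content of the ``in particular'' clause of that lemma, and once it is invoked your Sard--Smale/Abraham step goes through and yields the dense subset of $\B^r(T,2T)$ contained in $\B^r(S)$. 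Your closing remark about the role of the bound $2T$ should be corrected accordingly: its purpose is not that every orbit of minimal period at most $S$ ``can be treated at its minimal period'', but that every proper iterate of total period less than $2T$ comes from an orbit of minimal period at most $T$ and is therefore already nondegenerate on the parameter domain, so that only simple orbits ever require the $b$-direction.
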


\begin{proof}
By Lemma~\ref{l:transversality}, the $C^2$ map $\Phi|_{\B^r(T,2T)\times Y\times(0,2T)}$ is transverse to the diagonal $\Delta\subset Y\times Y$. Therefore, by Abraham's infinite dimensional transversality theorem \cite{Abraham:1967wj}, there exists a dense subset $\B\subset\B^r(T,2T)$ such that, for each $b\in\B$, the map $\Phi_b|_{Y\times(0,2T)}$ is transverse to the diagonal $\Delta\subset Y\times Y$. This implies that $\B$ is contained in $\B^r(S)$ for each $S\in(T,2T)$, and the lemma follows.
\end{proof}

\begin{lem}\label{l:dense2}
For each $T>0$, $\B^r(T,2T)$ is dense in $\B^r(T)$.
\end{lem}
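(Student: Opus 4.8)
The plan is to reduce the statement to a perturbation problem involving only finitely many closed orbits, whose linearized Poincar\'e maps are then adjusted one at a time by conformal perturbations of the contact form supported away from the flow box $K$. So let $b\in\B^r(T)$; since $\B^r(T)\subset C^r_K(Y)$ we have $b|_K\equiv0$, so $K$ is still a flow box for $\phi_{e^b\lambda}^t$, and in particular no closed orbit of $e^b\lambda$ is contained in $K$. Because $b\in\B^r(T)$, all closed orbits of $e^b\lambda$ of minimal period at most $T$ are transversely non-degenerate, hence isolated; by compactness of $Y$ there are finitely many of them, say $\gamma_1,\dots,\gamma_N$, pairwise disjoint, with minimal periods $\tau_i\in(0,T]$ and linearized Poincar\'e maps $P_i:=\diff\phi_{e^b\lambda}^{\tau_i}(z_i)|_{\xi}$ for chosen points $z_i\in\gamma_i\setminus K$. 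I would fix pairwise disjoint balls $U_i\subset Y\setminus K$ with $z_i\in U_i$, $U_i\cap\gamma_j=\varnothing$ for $j\neq i$, and $\gamma_i\cap U_i$ a single arc. For $\beta\in C^r_K(Y)$ supported in $\bigcup_iU_i$ and $C^r$-small, lower semicontinuity of the minimal period together with the non-degeneracy of the $\gamma_i$ forces the closed orbits of $e^{b+\beta}\lambda$ of minimal period at most $T$ to be exactly the unique small deformations $\gamma_i'$ of the $\gamma_i$, of minimal periods $\tau_i'$ close to $\tau_i$; and since $\B^r(T)$ is open, $b+\beta\in\B^r(T)$. Hence $b+\beta\in\B^r(T,2T)$ as soon as, for each $i$, the deformed return map $P_i':=\diff\phi_{e^{b+\beta}\lambda}^{\tau_i'}(z_i')|_\xi$ satisfies $1\notin\sigma\big((P_i')^\ell\big)$ for every $\ell\in\N$ with $\ell\tau_i'\le 2T$.

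Next I would isolate a single orbit. Fix $i$ and an integer $m_i>2T/\tau_i$, and in the $3$-dimensional group $\mathrm{Sp}(\xi_{z_i})$ of linear symplectomorphisms of $(\xi_{z_i},\diff\lambda_{z_i}|_{\xi})$ let $\mathcal{Z}_i$ be the set of $Q$ with $1\in\sigma(Q^\ell)$ for some $\ell\le m_i$. Each of the finitely many sets $\{Q:\det(Q^\ell-I)=0\}$ is a proper algebraic subset, so $\mathcal{Z}_i$ is closed with empty interior; in particular every neighborhood of $P_i$ contains points of $\mathrm{Sp}(\xi_{z_i})\setminus\mathcal{Z}_i$. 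A perturbation $\beta$ supported in $U_i$ leaves $\gamma_j$ and $P_j$ untouched for $j\neq i$, since those orbits avoid $U_i$. Hence it suffices to prove that, for each $i$, by choosing $\beta$ supported in $U_i$ one can make $P_i'$ equal to any prescribed element of $\mathrm{Sp}(\xi_{z_i})$ sufficiently close to $P_i$: granting this, I would treat $i=1,\dots,N$ in turn, at each step pushing $P_i'$ into $\mathrm{Sp}(\xi_{z_i})\setminus\mathcal{Z}_i$, and --- keeping all perturbations small enough that $\{\ell:\ell\tau_i'\le 2T\}\subset\{1,\dots,m_i\}$ --- the resulting $\beta$ witnesses $b+\beta\in\B^r(T,2T)$ with $b+\beta$ arbitrarily $C^r$-close to $b$.

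The main obstacle is precisely this last claim --- that localized conformal perturbations of the contact form realize every infinitesimal deformation of the linearized Poincar\'e map --- and I would prove it by the contact-Hamiltonian technique already used for Lemma~\ref{l:transversality}. For $\beta$ supported in $U_i$, consider the curve $s\mapsto Q_i(s)\in\mathrm{Sp}(\xi_{z_i})$ of return maps of $e^{b+s\beta}\lambda$, read off in a fixed symplectic trivialization of $\xi$ near $z_i$. Differentiating the linearized Reeb flow along $\gamma_i$ via Duhamel's formula, $\tfrac{\diff}{\diff s}\big|_{s=0}Q_i(s)$ is a linear functional of $\beta$ given by an integral over the arc $\gamma_i\cap U_i$ whose integrand at each point is a fixed linear image of the value and the derivatives up to order $2$ of $\beta$ there; here one uses that $R_{e^g\lambda}$ depends only on the $1$-jet of $g$, so that $\diff\phi^t_{e^g\lambda}$ depends only on the $2$-jet of $g$. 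At $z_i$ the $\xi_{z_i}$-Hessian of $\beta$ --- freely prescribable --- enters through a linear isomorphism onto the Lie algebra $\mathfrak{sp}(\xi_{z_i})$ (both spaces of dimension $3$), so concentrating $\beta$ near $z_i$ through a suitable bump one sees that $\tfrac{\diff}{\diff s}\big|_{0}Q_i(s)$ sweeps out, up to an error of order $\mathrm{diam}(U_i)$, all of $T_{P_i}\mathrm{Sp}(\xi_{z_i})$. Choosing $\beta_1,\beta_2,\beta_3$ for which the derivative of $(\sigma_1,\sigma_2,\sigma_3)\mapsto$ (return map of $e^{b+\sigma_1\beta_1+\sigma_2\beta_2+\sigma_3\beta_3}\lambda$) at the origin is invertible, the finite-dimensional inverse function theorem shows this map covers a neighborhood of $P_i$, which is what we need. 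This infinitesimal surjectivity is the contact counterpart of the classical computation behind the bumpy metric theorem (compare Anosov \cite{Anosov:1982ix}); granting it, the rest is the bookkeeping sketched above.
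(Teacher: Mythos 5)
Your proposal follows essentially the same route as the paper's proof: reduce to the finitely many (nondegenerate, hence isolated) closed orbits of period at most $T$, each of which meets $Y\setminus K$, argue that for $C^r$-small perturbations supported away from $K$ these orbits and their continuations are the only closed orbits of period at most $T$, and then perturb near a point of each orbit lying outside the flow box so that all iterates up to period $2T$ become transversally nondegenerate. The only difference is that the paper delegates this last perturbation step to a ``well known'' fact, whereas you sketch it explicitly (local surjectivity onto a neighborhood of $P_i$ in $\mathrm{Sp}(\xi_{z_i})$ via localized conformal perturbations, together with the closed, nowhere dense degenerate locus and the inverse function theorem); your sketch is consistent with the standard Anosov-type computation and is correct.
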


\begin{proof}
Consider an arbitrary $b_0\in\B^r(T)$. Notice that the subset
\begin{align*}
F:=\bigcup_{\tau\in(0,T]} \fix(\phi_{e^{b_0}\lambda}^\tau)
\end{align*}
is the union of finitely many closed orbits $\gamma_1,...,\gamma_k$ of the Reeb flow $\phi_{e^{b_0}\lambda}^t$. Since $b_0\equiv0$ on the flow box $K$, each $\gamma_i$ intersects the open set $Y\setminus K$. Since all the closed orbits of $\phi_{e^{b_0}\lambda}^t$ with period at most $T$ are transversally non-degenerate, there exists $\epsilon>0$ such that no closed orbit of $\phi_{e^{b_0}\lambda}^t$ has minimal period in the interval $(T,T+\epsilon]$.
For each $i=1,...,k$, we choose a point $z_i(b_0)$ on the intersection of the closed orbit $\gamma_i$ with $Y\setminus K$, and we denote by $\tau_i(b_0)\in(0,T]$ the minimal period of $\gamma_i$. Since all the $\gamma_i$'s are non-degenerate $\tau_i(b_0)$-periodic orbits, there exist an open neighborhood $\UU\subset C^r_K(Y)$ of $b_0$, open neighborhoods $U_i\subset Y\setminus K$ of $z_i(b_0)$, and continuous (indeed, even more regular) maps
$\bm z:\UU\to U_1\times ...\times U_k$,
$\bm z(b) = (z_1(b),...,z_k(b))$ and $\bm\tau:\UU\to (0,T+\epsilon/2]\times ...\times (0,T+\epsilon/2]$, $\bm\tau(b) = (\tau_1(b),...,\tau_k(b))$ with the following properties: for each $b\in\UU$, the only closed orbits of the Reeb flow of $e^b\lambda$ with minimal period at most $T+\epsilon/2$ and intersecting $U_1\cup...\cup U_k$ are the $\phi_{e^b\lambda}^t(z_i(b))$'s; the minimal period of $\phi_{e^b\lambda}^t(z_i(b))$ is $\tau_i(b)$, and $1\not\in\sigma(\diff\phi_{e^b\lambda}^{\tau_i(b)}(z_i(b)))$. We can choose a smaller open neighborhood $\WW\subset\UU$ of $b_0$ so that, for each $b\in\WW$, the Reeb flow of $e^b\lambda$ does not have closed orbits with period at most $T+\epsilon/2$ not intersecting $U_1\cup...\cup U_k$. For every such open neighborhood $\WW$, it is well known that we can choose $b\in \WW$ such that, for each $i=1,...,k$ and $h\in\N\cap(0,2T/\tau_i(b))$, we have $1\not\in\sigma(\diff\phi_{e^b\lambda}^{h\tau_i(b)}(z_i(b)))$. Therefore such $b$ belongs to $\B^r(T,2T)$.
\end{proof}

\begin{proof}[Proof of Proposition~\ref{p:bumpy}]
We define the $G_\delta$-set
\begin{align*}
\B^r:=\bigcap_{S\in\N} \B^r(S).
\end{align*}
We first assume that $3\leq r<\infty$.
Lemmas~\ref{l:dense1} and~\ref{l:dense2} imply that the open subset $\B^r(\tfrac32 T)$ is dense in $\B^r(T)$. Therefore, $\B^r(S)$ is dense in $\B^r(T)$ for all $S\geq T$, and by the Baire category theorem we conclude that $\B^r$ is dense in $\B^r(T)$ for each $T>0$. Notice that, for each $b\in C^r_K(Y)$, there exists $T>0$ such that no closed orbit of the Reeb vector field $\phi_{e^b\lambda}^t$ has period less than or equal to $T$. In particular, every $b\in C^r_K(Y)$ is contained in $\B^r(T)$ for some $T>0$. This, together with the above density argument, implies that $\B^r$ is dense in $C^r_K(Y)$.
Since $C^r_K(Y)\hookrightarrow C^2_K(Y)$ is a dense inclusion, we readily infer that the set $\B^2$ is dense in $C^2_K(Y)$. 

The set $\B^\infty(S)=\B^r(S)\cap C^\infty_K(Y)$ is open in $C^\infty_K(Y)$. For $2\leq r<\infty$, since $\B^r(S)$ is open in $C^r_K(Y)$, and both $\B^r(S)$ and $C^\infty_K(Y)$ are dense in $C^r_K(Y)$, we infer that $\B^\infty(S)$ is dense in $C^r_K(Y)$. We recall that the topology of $C^\infty_K(Y)$ is generated by open sets of the form $W\cap C^\infty_K(Y)$, where $W$ is an open subset of some $C^r_K(Y)$ with $2\leq r<\infty$. If $W\subseteq C^r_K(Y)$ is one such non-empty open subset, we have
\[\B^\infty(S)\cap (W\cap C^\infty_K(Y))=\B^\infty(S)\cap  W\neq\varnothing.\] 
This shows that $\B^\infty(S)$ is open and dense in $C^\infty_K(Y)$. By the Baire category theorem, $\B^\infty$ is dense in $C^\infty_K(Y)$.
\end{proof}

\bibliography{_biblio}
\bibliographystyle{amsalpha}

\end{document}